\newtheorem{theorem}{Theorem}[section]
\newtheorem{proposition}[theorem]{Proposition}
\newtheorem{lemma}[theorem]{Lemma}
\newtheorem{remark}[theorem]{Remark}
\newtheorem*{remark*}{Remark}
\numberwithin{equation}{section}
\renewcommand{\geq}{\geqslant}
\renewcommand{\leq}{\leqslant}
\title[Stability of periodic waves]
{Stability of periodic traveling waves \\ for nonlinear dispersive equations}
\author[Hur]{Vera~Mikyoung~Hur}
\address{Department of Mathematics, University of Illinois at Urbana-Champaign, Urbana, IL 61801 USA}
\email{verahur@math.uiuc.edu}
\author[Johnson]{Mathew~A.~Johnson}
\address{Department of Mathematics, University of Kansas, Lawrence, KS 66045 USA} 
\email{matjohn@ku.edu}  
\date{\today}
\keywords{stability; periodic traveling waves; nonlinear dispersive; nonlocal}
\subjclass[2010]{35B35, 35Q53, 35B10}
\begin{document}

\begin{abstract}
We study the stability and instability of periodic traveling waves for Korteweg-de Vries type equations with fractional dispersion and related, nonlinear dispersive equations. We show that a local constrained minimizer for a suitable variational problem is nonlinearly stable to period preserving perturbations, provided that the associated linearized operator enjoys a Jordan block structure. We then discuss when the linearized equation admits solutions exponentially growing in time. 
\end{abstract}

\maketitle

%%%%%%%%%%%%%%%%%%%%%%%%%%%%%%%%%%%%%%%%%%%%%%%%%
%%%%%%%%%%%%%%%%%%%%%%%%%%%%%%%%%%%%%%%%%%%%%%%%%
%%%%%%%%%%%%%%%%%%%%%%%%%%%%%%%%%%%%%%%%%%%%%%%%%
\section{Introduction}\label{S:intro}
%%%%%%%%%%%%%%%%%%%%%%%%%%%%%%%%%%%%%%%%%%%%%%%%%
%%%%%%%%%%%%%%%%%%%%%%%%%%%%%%%%%%%%%%%%%%%%%%%%%
%%%%%%%%%%%%%%%%%%%%%%%%%%%%%%%%%%%%%%%%%%%%%%%%%
We study the stability and instability of periodic traveling waves 
for a class of nonlinear dispersive equations, 
in particular, equations of Korteweg-de Vries (KdV) type
\begin{equation}\label{E:KdV1} 
u_t-\mathcal{M}u_x+f(u)_x=0.
\end{equation}
Here $t \in \mathbb{R}$ denotes the temporal variable and 
$x \in \mathbb{R}$ is the spatial variable in the predominant direction of wave propagation;
$u=u(x,t)$ is real valued, representing the wave profile or a velocity. 
Throughout we express partial differentiation either by a subscript or using the symbol $\partial$. 
Moreover $\mathcal{M}$ is a Fourier multiplier, 
defined as $\widehat{\mathcal{M}u}(\xi)=m(\xi)\widehat{u}(\xi)$
and characterizing dispersion in the linear limit, while $f$ is the nonlinearity. 
In many examples of interest, $f$ obeys a power law. 

Perhaps the best known among equations of the form \eqref{E:KdV1} is the KdV equation 
\[
u_t+u_{xxx}+(u^2)_x=0
\] 
itself, which was put forward in \cite{Bsnesq} and \cite{KdV} 
to model the unidirectional propagation of surface water waves 
with small amplitudes and long wavelengths in a channel; 
it has since found relevances in other situations such as Fermi-Pasta-Ulam lattices. 
Observe, however, that \eqref{E:KdV1} is {\em nonlocal} 
unless the dispersion symbol $m$ is a polynomial of $i\xi$;
examples include the Benjamin-Ono equation (see \cite{Benjamin, Ono}, for instance) 
and the intermediate long wave equation (see \cite{Joseph}, for instance), 
for which $m(\xi)=|\xi|$ and $\xi\coth\xi-1$, respectively, while $f(u)=u^2$. 
Another example, proposed by Whitham \cite{Whitham} to argue for breaking of water waves, 
corresponds to $m(\xi)=\sqrt{(\tanh\xi)/\xi}$ and $f(u)=u^2$. 
Incidentally the quadratic nonlinearity is characteristic of many wave phenomena. 

\

A traveling wave solution of \eqref{E:KdV1} takes the form $u(x,t)=u(x-ct)$, 
where $c\in\mathbb{R}$ and $u$ satisfies by quadrature that 
\[
\mathcal{M}u-f(u)+cu+a=0
\]
for some $a\in \mathbb{R}$. 
In other words, it steadily propagates at a constant speed without changing the configuration. 
Periodic traveling waves of the KdV equation are known in closed form, 
namely cnoidal waves; see \cite{KdV}, for instance. 
Moreover Benjamin \cite{Benjamin} calculated periodic traveling waves of the Benjamin-Ono equation. 
For a broad range of dispersion symbols and nonlinearities,
a plethora of periodic traveling waves of \eqref{E:KdV1} may be attained from variational arguments. 
To illustrate, we shall discuss in Section~\ref{S:existence} 
a minimization problem for a family of KdV equations with fractional dispersion.

Benjamin in his seminal work \cite{Ben-KdV} (see also \cite{Bona}) explained that 
solitary waves of the KdV equation are nonlinearly stable. 
By a solitary wave, incidentally, we mean a traveling wave solution which vanishes asymptotically. 
Benjamin's proof hinges upon that 
the KdV ``soliton" arises as a constrained minimizer for a suitable variational problem 
and spectral information of the associated linearized operator. 
Later it developed into a powerful stability theory in \cite{GSS}, for instance, 
for a general class of Hamiltonian systems and led to numerous applications. 
In the case of $m(\xi)=|\xi|^\alpha$, $\alpha\geq 1$, and $f(u)=u^{p+1}$, $p\geq 1$, in \eqref{E:KdV1}, 
in particular, solitary waves were shown in \cite{BSS} (see also \cite{SS, Ws}) 
to arise as energy minimizers subject to the conservation of the momentum 
and to be nonlinearly stable if $p<2\alpha$
whereas they are constrained energy saddles and nonlinearly unstable if $p>2\alpha$.

We shall take matters further in Section~\ref{S:stability} and establish that 
a periodic traveling wave of a KdV equation with fractional dispersion 
is nonlinearly stable with respect to period preserving perturbations, 
provided that it locally minimizes the energy subject to conservations of the momentum and the mass
and that the associated linearized operator enjoys a Jordan block structure.
Moreover we relate the latter condition with the momentum and the mass 
as functions of Lagrange multipliers arising in the traveling wave equation,
generalizing that in \cite{BSS}, for instance, in the solitary wave setting.
%%%Vera: added a nondegeneracy assumption.
In the case of generalized KdV equations, i.e., $m(\xi)=\xi^2$ in \eqref{E:KdV1}, 
the nonlinear stability of a periodic traveling wave to same period perturbations 
was determined in \cite{J1}, for instance, through spectral conditions,
which were expressed in terms of eigenvalues of the associated monodromy map 
(or the periodic Evans function); see also \cite{P,PBSM, BJK, DK, DN}. 
Confronted with nonlocal operators, however, 
spectral problems may be out of reach by Evans function techniques.  
Instead we make an effort to replace ODE based arguments by functional analytic ones.
The program was recently set out in \cite{BHV}.

As a key intermediate step we shall demonstrate in Section \ref{S:nondegeneracy} that 
the linearized operator associated with the traveling wave equation is {\em nondegenerate} 
at a periodic, local constrained minimizer for a KdV equation with fractional dispersion. 
That is to say, its kernel is spanned merely by spatial translations. 
The nondegeneracy of the linearization proves a spectral condition, 
which plays a central role in the stability of traveling waves (see \cite{Ws, Lin1} among others) 
and the blowup (see \cite{KMR}, for instance) for the related, time evolution equation, 
and therefore it is of independent interest. In the case of generalized KdV equations, 
the nondegeneracy at a periodic traveling wave was identified in \cite{J1}, for instance, 
with that the wave amplitude not be a critical point of the period. 
Furthermore it was verified in \cite{Kwong}, among others, at solitary waves. 
These proofs utilize shooting arguments and the Sturm-Liouville theory for ODEs, 
which may not be applicable to nonlocal operatorors. 
Nevertheless, Frank and Lenzmann \cite{FL} obtained the property at solitary waves 
for a family of nonlinear nonlocal equations, which we follow. 
The idea lies in to find a suitable substitute for the Sturm-Liouville theory 
to count the number of sign changes in eigenfunctions 
for a linear operator with a fractional Laplacian. 

\

The present development may readily be adapted to other, nonlinear dispersive equations. 
We shall illustrate this in Section \ref{S:BBM} by discussing equations of regularized long wave type. 
We shall remark in Section \ref{S:instability} about Lin's approach \cite{Lin1} to linear instability. 

%%%%%%%%%%%%%%%%%%%%%%%%%%%%%%%%%%%%%%%%%%%%%%%%%
%%%%%%%%%%%%%%%%%%%%%%%%%%%%%%%%%%%%%%%%%%%%%%%%%
%%%%%%%%%%%%%%%%%%%%%%%%%%%%%%%%%%%%%%%%%%%%%%%%%
\section{Existence of local constrained minimizers}\label{S:existence}
%%%%%%%%%%%%%%%%%%%%%%%%%%%%%%%%%%%%%%%%%%%%%%%%%
%%%%%%%%%%%%%%%%%%%%%%%%%%%%%%%%%%%%%%%%%%%%%%%%%
%%%%%%%%%%%%%%%%%%%%%%%%%%%%%%%%%%%%%%%%%%%%%%%%%
We shall address the stability and instability mainly for the KdV equation 
with fractional dispersion 
\begin{equation}\label{E:KdV}
u_t-\Lambda^{\alpha} u_x+(u^2)_x=0,
\end{equation}
where $0<\alpha\leq 2$ and $\Lambda=\sqrt{-\partial_x^2}$ is defined via the Fourier transform 
as $\widehat{\Lambda u}(\xi)=|\xi|\hat{u}(\xi)$. 

In the case of $\alpha=2$, notably, \eqref{E:KdV} recovers the KdV equation,
and in the case of $\alpha=1$ it corresponds to the Benjamin-Ono equation. In the case\footnote{
Note that $\Lambda^\alpha\partial_x$ is non singular for $\alpha\geq -1$.} 
of $\alpha=-1/2$, furthermore, \eqref{E:KdV} was argued in \cite{Hur-breaking} 
to have relevances to surface water waves in two dimensions in the infinite depths.
%to approximate up to ``quadratic" order the water wave problem in two spatial dimensions in the infinite depth case.  
Observe that \eqref{E:KdV} is nonlocal for $0<\alpha<2$. 
Incidentally fractional powers of the Laplacian occur in numerous applications, 
such as dislocation dynamics in crystals (see \cite{CFM}, for instance) 
and financial mathematics (see \cite{CT}, for instance). 

The present treatment extends mutatis mutandis to general power-law nonlinearities;
see Remark \ref{R:gKdV}. We focus on the quadratic nonlinearity, however, to simplify the exposition. 
%Incidentally it is characteristic of many wave phenomena; see \cite{Whitham}, for instance.

\

Throughout we'll work in the $L^2$-based Sobolev spaces over the periodic interval $[0,T]$, 
where $T>0$ is fixed although at times it is treated as a free parameter. For $0<\alpha<2$ let
%We define a periodic Sobolev space of fractional order to be equipped with the norm
\[ 
\|u\|_{H^{\alpha/2}_{per}([0,T])}^2=\int^T_0(u^2+|\Lambda^{\alpha/2}u|^2)~dx.
\]
We employ the standard notation $\langle\cdot\,,\cdot\rangle$ for 
the $L^2_{per}([0,T])$-inner product. 

\

Notice that \eqref{E:KdV} may be written in the Hamiltonian form
\begin{equation}\label{E:H}
u_t=J\delta H(u),
\end{equation}
where $J=\partial_x$ is the symplectic form,
\begin{equation}\label{E:HKdV}
H(u)=\int_0^T\Big(\frac{1}{2}|\Lambda^{\alpha/2}u|^2-\frac{1}{3}u^{3}\Big)~dx=:K(u)+U(u)
\end{equation}
is the Hamiltonian and $\delta$ denotes variational differentiation;
$K$ and $U$ correspond to the kinetic and potential energies, respectively. 
Notice that \eqref{E:KdV} possesses, in addition to $H$, two conserved quantities 
\begin{align}
P(u)=&\int_0^T\frac{1}{2}u^2~dx \label{E:PKdV}
\intertext{and}
M(u)=&\int_0^Tu~dx, \label{E:MKdV}
\end{align}
which correspond to the momentum and the mass, respectively. 
Conservation of $P$ implies that \eqref{E:KdV} is invariant under spatial translations 
thanks to Noether's theorem while $M$ is a Casimir invariant of the flow induced by \eqref{E:KdV} 
and is associated with that the kernel of the symplectic form is spanned by a constant. 
Notice that  
\begin{equation}\label{E:PM}
\delta P(u)=u \quad\text{and}\quad \delta M(u)=1.
\end{equation}
Moreover \eqref{E:KdV} remains invariant under 
\begin{equation}\label{E:scaling}
u(x,t)\mapsto \lambda^\alpha u(\lambda (x-x_0), \lambda^{\alpha+1}t)
\end{equation}
for any $\lambda>0$ for any $x_0\in\mathbb{R}$.

%%%%%%%%%%%%%%%%%%%%%%%%%%%%%%%%%%%%%%%%%%%%%%%%%
%%%%%%%%%%%%%%%%%%%%%%%%%%%%%%%%%%%%%%%%%%%%%%%%%
\begin{remark}[Well-posedness]\label{R:LWP}\rm
In the range $\alpha\geq-1$, one may work out the local in time well-posedness 
for \eqref{E:KdV} in $H^{3/2+}_{per}([0,T])$, combining an a priori bound and a compactness argument. 
Without recourse to dispersive effects, 
the proof is identical to that for the inviscid Burgers equation, i.e., $\alpha=0$. We omit the detail. 

With the help of techniques in nonlinear dispersive equations and specific properties of the equation, 
the global in time well-posedness for \eqref{E:KdV} may be established in $H^{-1/2+}_{per}([0,T])$
in the case of $\alpha=2$, namely the KdV equation (see \cite{CKSTT}, for instance), 
and in $H^{0+}_{per}([0,T])$ in the case of $\alpha=1$, 
the Benjamin-Ono equation (see \cite{Molinet}, for instance). 
For non-integer values of $\alpha$, however, 
the existence matter for \eqref{E:KdV} seems not adequately understood in spaces of low regularities. 
The global well-posedness in $H^{\alpha/2}(\mathbb{R})$ was recently settled in \cite{KMR} 
for \eqref{E:KdV}, in the case of $1<\alpha< 2$ and $u^{p+1}$ in place of $u^2$, $p\leq 2\alpha$,
but the proof seems to break down in the periodic functions setting.  
\end{remark}
%%%%%%%%%%%%%%%%%%%%%%%%%%%%%%%%%%%%%%%%%%%%%%%%%
%%%%%%%%%%%%%%%%%%%%%%%%%%%%%%%%%%%%%%%%%%%%%%%%%

In what follows we shall work in a suitable subspace, say, $X$ of $H^{\alpha/2}_{per}([0,T])$, 
where the initial value problem associated with \eqref{E:KdV} is well-posed for some interval of time 
and $H, P, M: X \to \mathbb{R}$ are smooth.

\

A periodic traveling wave of \eqref{E:KdV} takes the form $u(x,t)=u(x-ct-x_0)$, 
where $c\in\mathbb{R}$ represents the wave speed, $x_0\in\mathbb{R}$ is the spatial translate 
and $u$ is $T$-periodic, satisfying by quadrature that
\begin{equation}\label{E:pKdV}
\Lambda^\alpha u-u^{2}+cu+a=0
\end{equation}
for some $a\in\mathbb{R}$ (in the sense of distributions). Equivalently, it arises as a critical point of 
\begin{equation}\label{D:E}
E(u;c,a)=H(u)+cP(u)+aM(u).
\end{equation}
Indeed 
\begin{equation}\label{E:pKdV'}
\delta E(u;c,a)=0 
\end{equation}
agrees with \eqref{E:pKdV}. %since $\delta H(u)=\Lambda^\alpha u-u^2$ using \eqref{E:PM}.

Henceforth we shall write a periodic traveling wave of \eqref{E:KdV} as $u=u(\cdot\,;c,a)$. 
In a more comprehensive description, it is specified by four parameters $c$, $a$ and $T$, $x_0$. 
Note, however, that $T>0$ is arbitrary and fixed. 
Corresponding to translational invariance (see \eqref{E:scaling}), moreover, 
$x_0$ is inconsequential in the present development. Hence we may mod it out. 

In the present notation, a solitary wave whose profile vanishes asymptotically corresponds,
formally, to $a=0$ and $T=+\infty$.  

\

In the case of $\alpha=2$, periodic traveling waves of \eqref{E:KdV}, namely the KdV equation, 
are well known in closed form, involving Jacobi elliptic functions; see \cite{KdV}, for instance. 
In the case of $\alpha=1$, moreover, 
Benjamin \cite{Benjamin} exploited the Poisson summation formula 
and explicitly calculated periodic traveling waves of \eqref{E:KdV}. 
In general, the existence of periodic traveling waves of \eqref{E:KdV} 
follows from variational arguments, although one may lose an explicit form of the solution. 
In the energy subcritical case, in particular, 
a family of periodic traveling waves of \eqref{E:KdV} locally minimizes the Hamiltonian 
subject to conservations of the momentum and the mass, 
generalizing ``ground states" in the solitary wave setting. 

%%%%%%%%%%%%%%%%%%%%%%%%%%%%%%%%%%%%%%%%%%%%%%%%%
%%%%%%%%%%%%%%%%%%%%%%%%%%%%%%%%%%%%%%%%%%%%%%%%%
\begin{proposition}[Existence, symmetry and regularity]\label{P:existence}
Let $1/3<\alpha\leq 2$.
A local minimizer $u$ for $H$ subject to that $P$ and $M$ are conserved 
exists in $H^{\alpha/2}_{per}([0,T])$ for each $0<T<\infty$ 
and it satisfies \eqref{E:pKdV} for some $c\neq 0$ and $a \in \mathbb{R}$. 
It depends upon $c$ and $a$ in the $C^1$ manner.

Moreover $u=u(\cdot\,; c,a)$ may be chosen to be even and strictly decreasing over the 
interval $[0,T/2]$, and $u \in H^\infty_{per}([0,T])$. 
\end{proposition}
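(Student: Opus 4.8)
The plan is to establish existence via the direct method in the calculus of variations, then extract the Euler--Lagrange equation with Lagrange multipliers, and finally upgrade the minimizer's qualitative properties by symmetric rearrangement and bootstrapping. First I would fix the two constraint values, say $P(u)=p_0>0$ and $M(u)=m_0$, and consider the constrained minimization of the Hamiltonian $H$ over the set $\{u\in H^{\alpha/2}_{per}([0,T]): P(u)=p_0,\ M(u)=m_0\}$. The key coercivity input is that for $\alpha>1/3$ the potential term $U(u)=-\tfrac13\int_0^T u^3\,dx$ is controlled by the kinetic term $K(u)=\tfrac12\int_0^T|\Lambda^{\alpha/2}u|^2\,dx$ together with the constraints. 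Concretely, using the Gagliardo--Nirenberg--Sobolev inequality adapted to the periodic fractional Sobolev space, one bounds $\|u\|_{L^3}^3$ by a power of $\|\Lambda^{\alpha/2}u\|_{L^2}$ times a power of $\|u\|_{L^2}$; since $\|u\|_{L^2}^2=2p_0$ is fixed along the constraint set, the resulting exponent on $\|\Lambda^{\alpha/2}u\|_{L^2}$ is strictly less than $2$ precisely in the energy-subcritical regime $\alpha>1/3$. This yields $H(u)\geq \tfrac12\|\Lambda^{\alpha/2}u\|_{L^2}^2 - C\|\Lambda^{\alpha/2}u\|_{L^2}^\theta$ with $\theta<2$, so $H$ is bounded below and coercive on the constraint set, forcing any minimizing sequence to be bounded in $H^{\alpha/2}_{per}([0,T])$.

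Next I would pass to the limit. By boundedness we extract a weakly convergent subsequence $u_n\rightharpoonup u$ in $H^{\alpha/2}_{per}$; weak lower semicontinuity handles the convex kinetic term $K$, while the compact embedding $H^{\alpha/2}_{per}([0,T])\hookrightarrow L^3_{per}([0,T])$ (valid for $\alpha>1/3$, again the subcritical condition) gives strong $L^3$ convergence so that $U(u_n)\to U(u)$ and the constraints $P,M$ pass to the limit; hence $u$ is a genuine minimizer. The Lagrange multiplier theorem then produces constants $c,a\in\mathbb{R}$ with $\delta H(u)+c\,\delta P(u)+a\,\delta M(u)=0$, which by \eqref{E:PM} is exactly \eqref{E:pKdV}; one rules out $c=0$ by a scaling or Pohozaev-type argument exploiting the invariance \eqref{E:scaling}, and for a \emph{local} minimizer the same analysis applies within a sufficiently small $H^{\alpha/2}_{per}$-ball. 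The $C^1$ dependence on $(c,a)$ follows from the implicit function theorem applied to \eqref{E:pKdV'}, provided the linearized operator is invertible transverse to the translation mode, which is where the nondegeneracy result of Section~\ref{S:nondegeneracy} enters.

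For the symmetry and monotonicity, the plan is to invoke symmetric decreasing rearrangement: the fractional seminorm $\|\Lambda^{\alpha/2}u\|_{L^2}^2$ does not increase under the periodic symmetric rearrangement $u\mapsto u^*$ (a P\'olya--Szeg\H{o}-type inequality for the fractional Laplacian, as in the Riesz rearrangement framework), while $\int u^3$ and the constraints are invariant; hence the minimizer may be taken even and nonincreasing on $[0,T/2]$. Strict monotonicity and the strict ordering then come from the strong maximum principle for $\Lambda^\alpha+c$ applied to \eqref{E:pKdV}, together with a moving-planes or sliding argument suited to the nonlocal operator. Finally, regularity is a bootstrap: starting from $u\in H^{\alpha/2}_{per}$, the equation $\Lambda^\alpha u = u^2-cu-a$ with $u^2\in H^{\alpha/2}_{per}$ (by the algebra/product estimates once $\alpha/2$ exceeds the relevant threshold, or by iterating through $L^p$ and Sobolev embeddings) gives $u\in H^{3\alpha/2}_{per}$, and repeating drives $u$ into $H^\infty_{per}([0,T])$.

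I expect the main obstacle to be the coercivity step and its sharp dependence on the subcritical threshold $\alpha>1/3$: one must choose the Gagliardo--Nirenberg exponents correctly in the \emph{periodic} setting (where the constant function lies in the space and the zero-mode must be handled via the fixed mass $M$) and verify that the critical scaling balancing $U$ against $K$ under the invariance \eqref{E:scaling} indeed degenerates exactly at $\alpha=1/3$, so that strict coercivity—and hence the very existence of a minimizer rather than an escaping minimizing sequence—holds throughout $1/3<\alpha\leq 2$.
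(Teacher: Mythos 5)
Your existence step contains a genuine gap: the functional $H$ is \emph{not} coercive --- indeed not even bounded below --- on the constraint set $\{P=p_0,\ M=m_0\}$ throughout the range $1/3<\alpha\leq 2$ that the proposition must cover. The Gagliardo--Nirenberg inequality you invoke has, by scaling, the form
\[
\|u\|_{L^3}^3\leq C\,\|\Lambda^{\alpha/2}u\|_{L^2}^{1/\alpha}\,\|u\|_{L^2}^{3-1/\alpha}
\]
(up to lower-order terms accounting for the mean in the periodic setting), so with $\|u\|_{L^2}$ fixed the exponent on the kinetic term is $1/\alpha$, which is strictly less than $2$ only when $\alpha>1/2$. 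The threshold $\alpha>1/3$ is the \emph{energy}-subcritical condition guaranteeing the compact embedding $H^{\alpha/2}_{per}([0,T])\hookrightarrow L^3_{per}([0,T])$; it does not yield coercivity of $H$ under the momentum constraint. Concretely, for $1/3<\alpha<1/2$ take $u_n(x)=n^{1/2}\phi(nx)$ (suitably periodized and corrected by an $O(1)$ constant to fix the mass): then $P(u_n)=O(1)$ while $K(u_n)\sim n^{\alpha}$ and $-U(u_n)\sim n^{1/2}$, so $H(u_n)\to-\infty$ along the constraint set. The infimum you propose to attain is $-\infty$, no global minimizer exists, and the direct method collapses precisely in the regime $1/3<\alpha<1/2$ (the case $\alpha=1/2$ being delicate as well); your closing remark that the scaling balance "degenerates exactly at $\alpha=1/3$" is therefore incorrect --- it degenerates at $\alpha=1/2$. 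This is exactly why the paper does not minimize $H$ at fixed $(P,M)$: it instead minimizes $K+P$ (the squared $H^{\alpha/2}_{per}$ norm) subject to the cubic constraint $U(\phi)=U<0$, a problem that is coercive and compact for every $\alpha>1/3$, and then upgrades the resulting solution of \eqref{E:a=0,c=1} to a \emph{local} constrained minimizer of $H$ via the Nehari-type comparison argument culminating in \eqref{E:H+P}. Your route, if repaired, proves the result only for $\alpha>1/2$, consistent with the distinction drawn in Remark \ref{R:solitary}. Note also that your suggestion to run "the same analysis within a sufficiently small ball" does not rescue the subcritical-but-$L^2$-supercritical range: minimizing over a small ball may produce a boundary minimizer that is not a critical point, and there is no canonical center for such a ball.

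Two smaller points. First, your regularity bootstrap leans on $H^{\alpha/2}_{per}$ being an algebra, which requires $\alpha>1$; for $1/3<\alpha\leq 1$ the paper must first establish $u\in L^\infty_{per}([0,T])$ by iterating $u=(\Lambda^\alpha+1)^{-1}u^2$ with resolvent (Hausdorff--Young) bounds, and only then run the fractional product-rule iteration --- this step cannot be skipped. Second, obtaining the $C^1$ dependence on $(c,a)$ from the nondegeneracy of Section \ref{S:nondegeneracy} is logically admissible only with care about ordering (the nondegeneracy proof uses the evenness, monotonicity and smoothness established in this very proposition); in the paper the parameter dependence comes more directly from the explicit Galilean and scaling reductions \eqref{E:scaling} to the normalized equation \eqref{E:a=0,c=1}, and likewise $c\neq 0$ is automatic from that normalization rather than from a Pohozaev argument, which you leave unspecified.
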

%%%%%%%%%%%%%%%%%%%%%%%%%%%%%%%%%%%%%%%%%%%%%%%%%
%%%%%%%%%%%%%%%%%%%%%%%%%%%%%%%%%%%%%%%%%%%%%%%%%

Below we develop integral identities which a periodic solution of \eqref{E:pKdV},
or equivalently \eqref{E:pKdV'}, a priori satisfies and which will be useful in various proofs. 

%%%%%%%%%%%%%%%%%%%%%%%%%%%%%%%%%%%%%%%%%%%%%%%%%
%%%%%%%%%%%%%%%%%%%%%%%%%%%%%%%%%%%%%%%%%%%%%%%%%
\begin{lemma}[Integral identities]
If $u \in H^{\alpha/2}_{per}([0,T]) \cap L^3_{per}([0,T])$ satisfies \eqref{E:pKdV}, 
or equivalently \eqref{E:pKdV'}, then 
\begin{gather}
2P-cM-aT=0, \label{E:I1-KdV} \\ 
2K+3U+2cP+aM=0. \label{E:I2-KdV}
\end{gather}
\end{lemma}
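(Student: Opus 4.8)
The plan is to derive both integral identities from the traveling wave equation \eqref{E:pKdV} by pairing it against well-chosen test functions in the $L^2_{per}([0,T])$-inner product and using the self-adjointness and homogeneity properties of $\Lambda^\alpha$. The two natural test functions are the constant $1$ and the solution $u$ itself; these produce \eqref{E:I1-KdV} and \eqref{E:I2-KdV} respectively.

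\smallskip

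\emph{Identity \eqref{E:I1-KdV}.} First I would integrate \eqref{E:pKdV} directly over $[0,T]$, which amounts to testing against the constant function $1$. The key observation is that $\int_0^T \Lambda^\alpha u\,dx = \langle \Lambda^\alpha u, 1\rangle = \langle u, \Lambda^\alpha 1\rangle = 0$, since $\Lambda^\alpha$ annihilates constants (its Fourier symbol $|\xi|^\alpha$ vanishes at $\xi=0$). The remaining terms give $-\int_0^T u^2\,dx + c\int_0^T u\,dx + aT = 0$, and rewriting $\int_0^T u^2\,dx = 2P$, $\int_0^T u\,dx = M$ yields exactly $2P - cM - aT = 0$.

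\smallskip

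\emph{Identity \eqref{E:I2-KdV}.} Next I would take the $L^2_{per}$-inner product of \eqref{E:pKdV} with $u$. This gives
\[
\langle \Lambda^\alpha u, u\rangle - \int_0^T u^3\,dx + c\int_0^T u^2\,dx + a\int_0^T u\,dx = 0.
\]
Using self-adjointness of $\Lambda^\alpha$ together with $\Lambda^\alpha = (\Lambda^{\alpha/2})^2$, the first term becomes $\langle \Lambda^{\alpha/2}u, \Lambda^{\alpha/2}u\rangle = \int_0^T |\Lambda^{\alpha/2}u|^2\,dx = 2K$. Matching the cubic and quadratic terms to the definitions in \eqref{E:HKdV}, \eqref{E:PKdV}, \eqref{E:MKdV}, I have $\int_0^T u^3\,dx = -3U$, $\int_0^T u^2\,dx = 2P$, and $\int_0^T u\,dx = M$. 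Substituting gives $2K - (-3U) + c(2P) + aM = 0$, that is $2K + 3U + 2cP + aM = 0$.

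\smallskip

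The main technical obstacle is justifying the integration-by-parts steps involving $\Lambda^\alpha$ under only the regularity hypothesis $u \in H^{\alpha/2}_{per}([0,T]) \cap L^3_{per}([0,T])$, since \eqref{E:pKdV} holds merely in the sense of distributions. Specifically, I must check that the pairings $\langle \Lambda^\alpha u, 1\rangle$ and $\langle \Lambda^\alpha u, u\rangle$ are well-defined: the former needs $\Lambda^\alpha u$ to be meaningfully integrable, while the latter requires moving one copy of $\Lambda^{\alpha/2}$ onto $u$, which is legitimate precisely because $u \in H^{\alpha/2}_{per}$. The cubic term $\int_0^T u^3\,dx$ is finite by the $L^3_{per}$ assumption. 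I would phrase these pairings on the Fourier side, where $\langle \Lambda^\alpha u, u\rangle = \sum_k |\xi_k|^\alpha |\hat u(k)|^2 = \sum_k |\xi_k|^{\alpha/2}\hat u(k)\cdot\overline{|\xi_k|^{\alpha/2}\hat u(k)}$ is manifestly the squared $\dot H^{\alpha/2}$-seminorm, so no regularity beyond $H^{\alpha/2}_{per}$ is needed; this makes both manipulations rigorous and the identities follow.
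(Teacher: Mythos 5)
Your proposal is correct and follows exactly the paper's argument: integrating \eqref{E:pKdV} over the period gives \eqref{E:I1-KdV}, and multiplying by $u$ and integrating gives \eqref{E:I2-KdV}. The paper states this in two lines without detail; your additional care with the distributional pairings and the Fourier-side justification of $\langle \Lambda^\alpha u, u\rangle = 2K$ is a sound elaboration of the same computation, not a different route.
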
 
%%%%%%%%%%%%%%%%%%%%%%%%%%%%%%%%%%%%%%%%%%%%%%%%%
%%%%%%%%%%%%%%%%%%%%%%%%%%%%%%%%%%%%%%%%%%%%%%%%%
 
\begin{proof}
Integrating \eqref{E:pKdV}, or equivalently \eqref{E:pKdV'}, over the periodic interval $[0,T]$ 
leads to \eqref{E:I1-KdV}. Multiplying it by $u$ and integrating over $[0,T]$ lead to \eqref{E:I2-KdV}.
\end{proof}
 
\begin{proof}[Proof of Proposition~\ref{P:existence}]
We claim that it suffices to take $a=0$ and $c=1$. Suppose on the contrary that $a\neq 0$.
We then assume without loss of generality that $c$ and $M$ are of opposite sign and $a> 0$. 
For, in case $c$ and $M$ are of the same sign, since \eqref{E:KdV} is time reversible, 
we make the change of variables $t \mapsto -t$ in \eqref{E:KdV} 
to reverse the sign of $c$ in \eqref{E:pKdV} while leaving other components of the equation invariant. 
Once we accomplish that $c$ and $M$ are of opposite sign, $a\geq 0$ must follow
since $P\geq 0$ and $T>0$ by definition. 
We shall then devise the change of variables $u \mapsto u+\frac12 (\sqrt{c^2+4a}-c)$ 
and rewrite \eqref{E:pKdV} as
\begin{equation}\label{E:a=0} 
\Lambda^\alpha u-u^2+\gamma u=0,\qquad \text{where}\quad \gamma =\sqrt{c^2+4a}>0.
\end{equation}
Therefore it suffices to take $a=0$ in \eqref{E:pKdV}. 
This is reminiscent of that \eqref{E:KdV} enjoys Galilean invariance 
under $u(x,t)\mapsto u(x, t)+u_0$ for any $u_0 \in \mathbb{R}$. 
By virtue of scaling invariance (see \eqref{E:scaling}), 
we shall further devise the change of variables $u(x) \mapsto 1/\gamma u(x/\gamma^\alpha)$ 
and rewrite \eqref{E:a=0} as
\begin{equation}\label{E:a=0,c=1}
\Lambda^\alpha u-u^2+u=0.
\end{equation}
To recapitulate, it suffices to take $a=0$ and $c=1$ in \eqref{E:pKdV} 
and seek a local minimizer for $H+P$. 
(But we shall not  a priori assume that $a=0$ or $c=1$ in the stability proof in Section \ref{S:stability}.) 

\

Since $H^{\alpha/2}_{per}([0,T])$ in the range $\alpha>1/3$ is compactly embedded 
in $L^3_{per}([0,T])$ by a Sobolev inequality, it follows from calculus of variations that 
for each parameter (abusing notation) $U<0$\footnote{
Note from \eqref{E:I2-KdV} that if $u\in H^{\alpha/2}_{per}([0,T])$, $\alpha>1/3$, 
satisfies \eqref{E:a=0,c=1} then $K(u)+P(u)>0$ and $U(u)<0$ unless $u\equiv 0$. }
there exists $u\in H^{\alpha/2}_{per}([0,T])$ such that 
\begin{equation}\label{E:K+P}
K(u)+P(u)=\inf \big\{K(\phi)+P(\phi): \phi \in H^{\alpha/2}_{per}([0,T]),\,U(\phi)=U \big\}.
\end{equation}
The proof is rudimentary. We merely pause to remark that 
$K(\phi)+P(\phi)$ amounts to $\|\phi\|_{H^{\alpha/2}_{per}([0,T])}^2$
and the constraint is compact in $H^{\alpha/2}_{per}([0,T])$. Moreover, $u$ satisfies
\begin{equation*}\label{E:dist}
\Lambda^\alpha u+ u=\theta u^2
\end{equation*}
for some $\theta\neq 0$ in the sense of distributions. 
By a scaling argument, we may choose $U$ to ensure that $\theta=1$.
Consequently (abusing notation) $u\in H^{\alpha/2}_{per}([0,T])$ attains 
the constrained minimization problem \eqref{E:K+P} and satisfies \eqref{E:a=0,c=1}.
Note from \eqref{E:I2-KdV} that $2K(u)+3U(u)+2P(u)=0$.

Furthermore we claim that 
\begin{equation}\label{E:H+P}
E(u)=\inf\{ E(\phi): \phi\in H^{\alpha/2}_{per}([0,T]),\,
\phi\not\equiv 0,\, 2K(\phi)+3U(\phi)+2P(\phi)=0\}.
\end{equation}
Since 
\begin{equation}\label{E:ident}
E(\phi)=H(\phi)+P(\phi)=K(\phi)+U(\phi)+P(\phi)=\frac13(K(\phi)+P(\phi))=-\frac12U(\phi)
\end{equation}
and $2K(\phi)+2P(\phi)=-3U(\phi)>0$ whenever $2K(\phi)+3U(\phi)+2P(\phi)=0$, $\phi\not\equiv0$, 
it suffices to show that 
\begin{equation}\label{E:U}
U(u)=\sup\{U(\phi): \phi \in H^{\alpha/2}_{per}([0,T]),\, 
\phi\not\equiv 0, \, 2K(\phi)+3U(\phi)+2P(\phi)=0\}.
\end{equation}
Suppose that $\phi \in H^{\alpha/2}_{per}([0,T])$, $\phi\not\equiv 0$ and
$2K(\phi)+3U(\phi)+2P(\phi)=0$. We define 
\begin{equation*}\label{E:b}
b=\left(\frac{U(u)}{U(\phi)}\right)^{1/3},
\end{equation*}
and observe that \eqref{E:U} follows if $b\leq 1$ so that $0\geq U(u)> U(\phi)$.
Indeed we infer from \eqref{E:ident} that
\begin{align*}
2K(b\phi)+3U(b\phi)+2P(b\phi)=&2b^2K(\phi)+3b^3U(\phi)+2b^2P(\phi) \\
=&2b^2(1-b)(K(\phi)+P(\phi)). 
\end{align*}
Moreover, since $U(b\phi)=b^3U(\phi)=U(u)$ and 
since $u$ attains the constrained minimization problem \eqref{E:K+P}, it follows that
\[
K(u)+P(u)\leq K(b\phi)+P(b\phi).
\] 
Consequently
\begin{align*}
0=2K(u)+3U(u)+2P(u)\leq &2K(b\phi)+3U(b\phi)+2P(b\phi) \\
=&2b^2(1-b)(K(\phi)+P(\phi)),
\end{align*}
whence $b\leq 1$. This proves the claim. Since 
\[
\left\langle \delta H(\phi)+\delta P(\phi), \phi\right\rangle=2K(\phi)+3U(\phi)+2P(\phi)
\]
for all $\phi\in H^{\alpha/2}_{per}([0,T])$, furthermore, 
$u$ solves the constrained minimization problem \eqref{E:H+P} if and only if
$u$ minimizes $H+P$ among its critical points. The existence assertion therefore follows. 
Clearly, $u$ depends upon $c$ and $a$ in the  $C^1$ manner.

\

To proceed, since the symmetric decreasing rearrangement of $u$ 
does not increase $\int^T_0 |\Lambda^{\alpha/2}u|^2~dx$ for $0<\alpha<2$ 
(see \cite{YP}, for instance, for a proof in the solitary wave setting) 
while leaving $\int_0^Tu^3~dx$ invariant,
it follows from the rearrangement argument that 
a local minimizer for $H$ subject to conservations of $P$ and $M$ 
must symmetrically decrease away from a point of principal elevation. 
The symmetry and monotonicity assertion then follows 
from translational invariance in \eqref{E:scaling}. 
(Note that unlike in the solitary waves setting, for which $a=0$ and $T=+\infty$, 
a periodic, local constrained minimizer needs not be positive everywhere.)

\

It remains to address the smoothness of a periodic solution of \eqref{E:pKdV}, or equivalently, 
\begin{equation}\label{E:integralE} 
u=(\Lambda^\alpha+1)^{-1}u^2
\end{equation}
after reduction to $a=0$, $c=1$ and after inversion.
The validity of \eqref{E:integralE} is to be specified in the course the proof. 
We claim that  if $u \in H^{\alpha/2}_{per}([0,T])$ satisfies \eqref{E:integralE} 
then $u \in L^\infty_{per}([0,T])$. 
In the case of $\alpha>1$ this follows immediately from a Sobolev inequality, 
whereas in the case of $1/3<\alpha\leq 1$ a proof based upon resolvent bounds for
$(\Lambda^\alpha+1)^{-1}$ is found in \cite[Lemma A.3]{FL}, for instance, 
albeit in the solitary wave setting.  Indeed, the Fourier series $\widehat{\frac{1}{|n|^\alpha+1}}$ 
lies in $\ell^r(\mathbb{Z})$ for $0< \alpha<1$ for $r>\frac{1}{1-\alpha}$ by the Hausdorff-Young inequality, 
whence $u \in L^\infty_{per}([0,T])$ after iterating \eqref{E:integralE} sufficiently many times. 

We then promote $u \in H^{\alpha/2}_{per}([0,T])\cap L^\infty_{per}([0,T])$ to $H^\alpha_{per}([0,T])$ 
since the Plancherel theorem leads to that
\[
\|\Lambda^\alpha u\|_{L^2}=\Big\|\frac{\Lambda^\alpha}{\Lambda^\alpha+1}u^2\Big\|_{L^2}
=\Big\|\frac{|\xi|^\alpha}{|\xi|^\alpha+1}\widehat{u^2}\Big\|_{L^2}\leq \|\widehat{u^2}\|_{L^2}
=\|u^2\|_{L^2} \leq \|u\|_{L^\infty}\|u\|_{L^2}<\infty.
\]
Furthermore the fractional product rule (see \cite{CW}, for instance) leads to that 
\[ 
\|\Lambda^{2\alpha} u\|_{L^2}=\Big\|\frac{\Lambda^{2\alpha}}{\Lambda^\alpha+1}u^2\Big\|_{L^2}
\leq \|\Lambda^\alpha u^2\|_{L^2} \leq C\|u\|_{L^\infty}\|\Lambda^\alpha u\|_{L^2}<\infty
\]
for $C>0$ a constant independent of $u$. After iterations, therefore, $u\in H^\infty_{per}([0,T])$ follows.
\end{proof}

%%%%%%%%%%%%%%%%%%%%%%%%%%%%%%%%%%%%%%%%%%%%%%%%%
%%%%%%%%%%%%%%%%%%%%%%%%%%%%%%%%%%%%%%%%%%%%%%%%%
\begin{remark}[Power-law nonlinearities]\label{R:gKdV}\rm
One may rerun the arguments in the proof of Proposition \ref{P:existence} 
in the case of the general power-law nonlinearity
\begin{equation}\label{E:KdV'}
u_t -\Lambda^\alpha u_x+(u^{p+1})_x=0
\end{equation}
and obtain a periodic traveling wave, 
where $0<\alpha\leq 2$ and $0<p<p_{max}$ is an integer such that 
\begin{equation}\label{D:pmax}
p_{max}:=\begin{cases} 
\frac{2\alpha}{1-\alpha} &\text{for $\alpha<1$,} \\ 
+\infty &\text{for $\alpha\geq 1$.}
\end{cases}\end{equation}
It locally minimizes in $H^{\alpha/2}_{per}([0,T])$ the Hamiltonian
\[ 
\int^T_0 \Big(\frac12|\Lambda^{\alpha/2} u|^2-\frac{1}{p+2}u^{p+2}\Big)~dx
\] 
subject to conservations of $P$ and $M$, defined in \eqref{E:PKdV} and \eqref{E:MKdV}, 
respectively. Note that $0<p<p_{max}$, which is vacuous if $\alpha\geq 1$, ensures that 
\eqref{E:KdV'} is $H^{\alpha/2}$-subcritical and
$H^{\alpha/2}_{per}([0,T]) \subset L^{p+2}_{per}([0,T])$ compactly. 
In the case of $p=1$, it is equivalent to that $\alpha>1/3$. 
\end{remark}
%%%%%%%%%%%%%%%%%%%%%%%%%%%%%%%%%%%%%%%%%%%%%%%%%
%%%%%%%%%%%%%%%%%%%%%%%%%%%%%%%%%%%%%%%%%%%%%%%%%

%%%%%%%%%%%%%%%%%%%%%%%%%%%%%%%%%%%%%%%%%%%%%%%%%
%%%%%%%%%%%%%%%%%%%%%%%%%%%%%%%%%%%%%%%%%%%%%%%%%
\begin{remark}[Periodic vs. solitary waves]\label{R:solitary}\rm
In the non-periodic functions setting, Weinstein \cite{Ws} (see also \cite{FL}) proved that 
\eqref{E:pKdV} in the range $\alpha>1/3$ admits a solitary wave, for which $a=0$ and $T=+\infty$. 
In the case of $\alpha>1/2$ so that \eqref{E:pKdV} is $L^2$-subcritical,
the solitary wave further arises as an energy minimizer subject to the conservation of the momentum. 
Periodic, local constrained minimizers for \eqref{E:pKdV}, constructed in Proposition \ref{P:existence}, 
are then expected to tend to the solitary wave as their period increases to infinity. 
This in some sense generalizes the homoclinic limit in the case of $\alpha=2$.  

In the case of $1/3<\alpha<1/2$, on the other hand, local constrained minimizers for \eqref{E:pKdV} exist 
in the periodic wave setting, but they are unlikely to achieve a limiting state
with bounded energy (the $H^{\alpha/2}$-norm) as the period increases to infinity. 

In the $L^2$-critical case, i.e. $\alpha=1/2$, periodic traveling waves with small energy
tend to the solitary wave as their period increases to infinity. 
Their stability is, however, delicate and outside the scope of the present development.
We refer the reader to \cite{KMR}, for instance.
\end{remark}

For a broad range of dispersion operators and nonlinearities, 
including $\alpha\geq -1$ in \eqref{E:KdV},
one is able to construct periodic traveling waves of \eqref{E:KdV1} at least with small amplitudes 
from perturbation arguments such as the Lyapunov-Schmidt reduction; see \cite{HJ2}, for instance.
In the solitary wave setting, in stark contrast, Pohozaev identities techniques dictate that 
\eqref{E:pKdV} ($a=0$) in the range $\alpha\leq1/3$ does not admit 
any nontrivial solutions in $H^{\alpha/2}(\mathbb{R})\cap L^3(\mathbb{R})$. 

%%%%%%%%%%%%%%%%%%%%%%%%%%%%%%%%%%%%%%%%%%%%%%%%%
%%%%%%%%%%%%%%%%%%%%%%%%%%%%%%%%%%%%%%%%%%%%%%%%%

%%%%%%%%%%%%%%%%%%%%%%%%%%%%%%%%%%%%%%%%%%%%%%%%%
%%%%%%%%%%%%%%%%%%%%%%%%%%%%%%%%%%%%%%%%%%%%%%%%%
%%%%%%%%%%%%%%%%%%%%%%%%%%%%%%%%%%%%%%%%%%%%%%%%%
\section{Nondegeneracy of the linearization}\label{S:nondegeneracy}
%%%%%%%%%%%%%%%%%%%%%%%%%%%%%%%%%%%%%%%%%%%%%%%%%
%%%%%%%%%%%%%%%%%%%%%%%%%%%%%%%%%%%%%%%%%%%%%%%%%
%%%%%%%%%%%%%%%%%%%%%%%%%%%%%%%%%%%%%%%%%%%%%%%%%
Throughout the section, let $u(\cdot\,; c,a)$ be a periodic traveling wave of \eqref{E:KdV}, 
whose existence follows from Proposition~\ref{P:existence}. 
We shall examine the $L^2_{per}([0,T])$-null spaces 
of the linearizations associated with \eqref{E:pKdV} and \eqref{E:KdV}.
%We shall demonstrate the nondenegeracy of the linearization associated with \eqref{E:pKdV}5and examine the generalized $L^2_{per}([0,T])$-null space of the linearization associated with \eqref{E:KdV}.
%at such a local constrained minimizer.  

%%%%%%%%%%%%%%%%%%%%%%%%%%%%%%%%%%%%%%%%%%%%%%%%%
%%%%%%%%%%%%%%%%%%%%%%%%%%%%%%%%%%%%%%%%%%%%%%%%%
\begin{proposition}[Nondegeneracy]\label{P:nondegeneracy}
Let $1/3<\alpha\leq 2$. If $u(\cdot\,;c,a) \in H^{\alpha/2}_{per}([0,T])$ 
for some $c\neq 0$, $a\in \mathbb{R}$ and for some $T>0$ 
locally minimizes $H$ subject to that $P$ and $M$ are conserved 
then the associated linearized operator 
\begin{equation}\label{E:L+KdV}
\delta^2E(u; c,a)=\Lambda^\alpha-2u+c
\end{equation}
acting on $L^2_{per}([0,T])$ is non\-de\-gen\-er\-ate. That is to say, 
\[
\ker(\delta^2E(u; c,a))={\rm span}\{u_x\}.
\]
\end{proposition}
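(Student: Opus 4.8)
The plan is to establish that the kernel of $\mathcal{L}:=\delta^2E(u;c,a)=\Lambda^\alpha-2u+c$ is exactly one-dimensional, spanned by $u_x$. Differentiating the profile equation \eqref{E:pKdV} with respect to $x$ shows immediately that $\mathcal{L}u_x=0$, so ${\rm span}\{u_x\}\subseteq\ker\mathcal{L}$; the substance is the reverse inclusion. Since by Proposition~\ref{P:existence} the profile $u$ may be taken even and strictly decreasing on $[0,T/2]$, its derivative $u_x$ is odd and vanishes precisely at $0$ and $T/2$, so it has exactly two sign changes per period. The strategy, following Frank--Lenzmann \cite{FL}, is to show that $u_x$ is a \emph{second} eigenfunction of $\mathcal{L}$ in an appropriate sense, and that the corresponding eigenvalue is simple, thereby ruling out any additional kernel element.

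First I would split the analysis by symmetry. The operator $\mathcal{L}$ commutes with the reflection $x\mapsto -x$, so it preserves the even and odd subspaces of $L^2_{per}([0,T])$, and I can treat $\ker\mathcal{L}$ separately on each. On the odd subspace, $u_x$ is the natural candidate generator, while on the even subspace I want to prove the kernel is trivial. The decisive input is a Sturm-oscillation--type count of sign changes of eigenfunctions for the nonlocal operator $\Lambda^\alpha+c$ perturbed by the potential $-2u$. The substitute for the Sturm--Liouville theory is the machinery developed in \cite{FL} via the Perron--Frobenius/heat-kernel positivity for $(\Lambda^\alpha+c)^{-1}$ (valid for $c>0$, which one may arrange after the reductions of Proposition~\ref{P:existence}): the ground state of $\mathcal{L}$ is simple, positive, and even, and the number of sign changes of the $n$-th eigenfunction is controlled from above.

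The key steps, in order, are then: (i) verify $\mathcal{L}u_x=0$ and record the sign-change structure of $u_x$; (ii) invoke the constrained-minimization characterization from Proposition~\ref{P:existence} to conclude that $\mathcal{L}$ has exactly one negative eigenvalue (with even, positive eigenfunction) when restricted to the relevant constrained space, so that $0$ is at most the \emph{second} eigenvalue; (iii) apply the nonlocal oscillation count to show that an eigenfunction associated with the second eigenvalue changes sign exactly twice, matching $u_x$; and (iv) conclude by simplicity of that eigenvalue that any kernel element is a scalar multiple of $u_x$. For the even subspace specifically, I would argue that a nonzero even element $\phi\in\ker\mathcal{L}$ would be a second eigenfunction in the even sector with too few sign changes to coexist with the even ground state, forcing $\phi\equiv 0$.

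The main obstacle is step (iii): transplanting the Sturm-oscillation count from the solitary-wave setting of \cite{FL} on $\mathbb{R}$ to the periodic setting on $[0,T]$. On the line the count rests on decay and on the structure of $(\Lambda^\alpha+c)^{-1}$ as a positivity-preserving operator; periodically one must replace the real-line kernel estimates with their periodized analogues and verify that the positivity-preserving property and the resulting simplicity of the lowest eigenvalue survive the periodization. I would expect the resolvent positivity to carry over because $\widehat{(|\xi|^\alpha+c)^{-1}}$ is a nonnegative, completely monotone-type symbol whose inverse Fourier series has a sign-definite kernel, but the careful bookkeeping of sign changes on the torus—where ``number of sign changes'' must be counted modulo the period and reconciled with the even/odd decomposition—is the delicate point and the place where the proof will require the most care.
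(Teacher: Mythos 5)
Your setup is on the right track --- the even/odd decomposition, the Perron--Frobenius argument placing $u_x$ at the bottom of the odd sector, and the Courant-type nodal bound (via the extension problem) as the substitute for Sturm--Liouville theory are all exactly the ingredients the paper uses. But there are two genuine gaps, one quantitative and one structural. The quantitative one: your step (ii) asserts that $\mathcal{L}=\delta^2E(u;c,a)$ has \emph{exactly one} negative eigenvalue, so that $0$ is at most the second eigenvalue. In the periodic setting the minimization is subject to \emph{two} constraints ($P$ and $M$), so local minimality only gives $\delta^2E(u)\geq 0$ on the codimension-two subspace $\{\delta P(u),\delta M(u)\}^\perp$, whence $1\leq n_-(\delta^2E(u))\leq 2$; the paper stresses (Remark~\ref{R:count}) that two negative eigenvalues genuinely can occur here, unlike for solitary ground states. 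Consequently zero may be the \emph{third} eigenvalue and an even kernel element may have up to two sign changes on the half-period, which is more room than your count allows for.

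The structural gap is in your even-sector argument, which is the heart of the matter. ``Too few sign changes to coexist with the even ground state'' is not a contradiction: the nonlocal oscillation lemma (Lemma~\ref{L:nodal}) gives only an \emph{upper} bound on sign changes --- there is no nonlocal analogue of the exactness and simplicity statements of classical Sturm theory, which is precisely the difficulty --- and an even function with two sign changes per period, orthogonal to a positive ground state, is perfectly consistent with every spectral fact you have cited. Likewise, step (iv)'s appeal to ``simplicity of that eigenvalue'' is circular: simplicity of the zero eigenvalue is the statement being proved. What is missing is the mechanism the paper (following Frank--Lenzmann) actually uses: a kernel element $\phi$ of the self-adjoint operator is orthogonal to its \emph{range}, and the range contains $1$, $u$ and $u^2$, since differentiating \eqref{E:pKdV'} in $a$ and $c$ gives $\delta^2E(u)u_a=-1$ and $\delta^2E(u)u_c=-u$, while \eqref{E:pKdV} gives $\delta^2E(u)u=-u^2-a$ (this is (L3) of Lemma~\ref{L:L+}). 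One then exploits the monotonicity of $u$ on $[0,T/2]$ to build, from $\mathrm{span}\{1,u,u^2\}$, a test function with the \emph{same} sign pattern as $\phi$: namely $1$ if $\phi$ never changes sign, $u-u(T_1)$ if $\phi$ changes sign at $\pm T_1$ only, and $(u-u(T_1))(u-u(T_2))$ if $\phi$ changes sign at $\pm T_1,\pm T_2$. In each case $\langle\phi,\cdot\rangle\neq 0$ against a range element, a contradiction. Without this orthogonality-plus-monotonicity device, the sign-change bookkeeping you describe (however carefully periodized) cannot close the argument.
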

%%%%%%%%%%%%%%%%%%%%%%%%%%%%%%%%%%%%%%%%%%%%%%%%%
%%%%%%%%%%%%%%%%%%%%%%%%%%%%%%%%%%%%%%%%%%%%%%%%%

The nondegeneracy of the linearization associated with the traveling wave equation is 
of paramount importance in the stability of traveling waves and the blowup 
for the related, time evolution equation; see \cite{Ws, Lin1, KMR}, among others. 
To prove the property is far from being trivial, however. 
Actually, one may cook up a polynomial nonlinearity, say, $f$, for which 
the kernel of $-\partial_x^2-f'(u)$ at a periodic traveling wave $u$ 
is two dimensional at isolated points. 
%It is usually imposed in terms of a spectral condition, although it may be proved in few special cases. 

In the case of generalized KdV equations, 
for which $\alpha=2$ in \eqref{E:KdV} but the nonlinearity is arbitrary, 
the nondegeneracy of the linearization at a periodic traveling wave was shown in \cite{J1}, 
for instance, to be equivalent to that the wave amplitude not be a critical point of the period;
the proof uses the Sturm-Liouville theory for ODEs.
Furthermore it was verified in \cite{Kwong}, among others, at solitary waves (in all dimensions). 
Amick and Toland \cite{AT} demonstrated the property in the case of $\alpha=1$ in \eqref{E:KdV}, 
namely the Benjamin-Ono equation, 
in the periodic and solitary wave settings, by relating via complex analysis techniques 
the nonlocal, traveling wave equation to a fully nonlinear ODE; 
unfortunately, the arguments are specific to the equation. 
Angulo Pava and Natali \cite{A-P} made an alternative proof 
based upon the theory of totally positive operators, but it necessitates an explicit form of the solution. 
A satisfactory understanding of the nondegeneracy of the linearization
thus seems largely missing for nonlocal equations. 
The main obstruction is that shooting arguments and other ODE methods, 
which seem crucial in the arguments for local equations, may not be applicable.

Nevertheless, Frank and Lenzmann \cite{FL} recently obtained the nondegeneracy of the linearization
at solitary waves for a family of nonlinear nonlocal equations with fractional derivatives. 
Their idea is to find a suitable substitute for the Sturm-Liouville theory 
to estimate the number of sign changes in eigenfunctions for a fractional Laplacian with potential. 
Our proof of Proposition~\ref{P:nondegeneracy} follows along the same line 
as the arguments in \cite[Section 3]{FL}, but with appropriate modifications 
to accommodate the periodic nature of the problem. 

%%%%%%%%%%%%%%%%%%%%%%%%%%%%%%%%%%%%%%%%%%%%%%%%%%
%%%%%%%%%%%%%%%%%%%%%%%%%%%%%%%%%%%%%%%%%%%%%%%%%%
\begin{lemma}[Oscillation of eigenfunctions]\label{L:nodal}
Under the hypothesis of Proposition~\ref{P:nondegeneracy}, 
an eigenfunction in $H^{\alpha/2}_{per}([0,T]) \cap C^0_{per}([0,T])$ 
corresponding to the $j$-th eigenvalue of $\delta^2E(u)$, $j=1,2,3$,  
changes its sign at most $2(j-1)$ times over the periodic interval $[0,T]$.
\end{lemma}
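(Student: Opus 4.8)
The plan is to transplant the sign-counting machinery of \cite[Section 3]{FL} to the periodic circle, the governing principle being that the number of sign changes of an eigenfunction is slaved to its eigenvalue index through a min-max argument. Write $L=\delta^2E(u)=\Lambda^\alpha+(c-2u)$ and label its eigenvalues $\lambda_1<\lambda_2\leq\lambda_3\leq\cdots$ with eigenfunctions $\phi_1,\phi_2,\dots$; by Proposition~\ref{P:existence} the potential $c-2u$ is smooth, so each $\phi_j$ is continuous and its number of sign changes on $[0,T]$ is well defined and, by periodicity, even. First I would record the base case $j=1$: since the fractional heat semigroup $e^{-t\Lambda^\alpha}$ on the circle has a strictly positive kernel (the periodization of the positive $\alpha$-stable kernel), $e^{-tL}$ is positivity improving, so a Perron--Frobenius argument gives a simple, strictly positive ground state $\phi_1$, which changes sign $0=2(1-1)$ times.

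For the inductive mechanism I would exploit the singular-integral form of $\Lambda^\alpha$ and locate precisely where the factor $2$ enters. Decompose $\phi_j$ over its nodal domains $I_1,\dots,I_N$ ($N$ even) by $\psi_m=\phi_j\mathbf 1_{I_m}$, and for $f=\sum_m a_m\psi_m$ compute, using $\langle L\phi_j,\psi_m\rangle=\lambda_j\|\psi_m\|^2$, the disjointness of supports (which kills the multiplication part), and the symmetry of $L$,
\[
\langle Lf,f\rangle-\lambda_j\|f\|^2=-\sum_{l<m}\langle\Lambda^\alpha\psi_l,\psi_m\rangle\,(a_l-a_m)^2,
\]
where, with a positive periodized kernel $G_\alpha$,
\[
\langle\Lambda^\alpha\psi_l,\psi_m\rangle=-\iint_{I_l\times I_m}\psi_l(x)\,\psi_m(y)\,G_\alpha(x-y)\,dx\,dy,
\]
so that each coupling is $\geq0$ exactly when $\psi_l,\psi_m$ carry opposite signs. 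Adjacent nodal domains always alternate sign, so the span of any two \emph{consecutive} restrictions furnishes a subspace on which $\langle Lf,f\rangle\leq\lambda_j\|f\|^2$; this already forces $\lambda_2\leq\lambda_j$ whenever $\phi_j$ changes sign, and settles the cases of few nodal domains. The obstruction to iterating naively is that \emph{non-adjacent} nodal domains may share a sign, whence $\langle\Lambda^\alpha\psi_l,\psi_m\rangle<0$ and the form ceases to be sign-definite on the full span; this nonlocal, same-sign coupling is exactly the loss that degrades the local Sturm count $j-1$ to the weaker $2(j-1)$.

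To control it I would pass to the Caffarelli--Silvestre / Stinga--Torrea extension, realizing $L$ as the Dirichlet-to-Neumann operator of a degenerate-elliptic problem on the cylinder $[0,T]\times(0,\infty)$ with periodicity in $x$; the nonlocal eigenvalue problem for $\phi_j$ becomes a local one for its extension $w_j$, whose trace at $y=0$ is $\phi_j$. Sign changes of $\phi_j$ then correspond to nodal curves of $w_j$ meeting the boundary, and a Courant-type nodal-domain bound for the extended local operator, together with a unique-continuation principle ruling out that $w_j$ vanishes on a boundary arc, yields the count. The main obstacle, and where I expect the real work to lie, is establishing this nodal-domain estimate with the \emph{correct constant}: the boundary-trace-to-bulk correspondence on the cylinder is what produces the factor $2$, and making it rigorous requires the degenerate-elliptic regularity of $w_j$, the boundary unique continuation, and care that the periodic (rather than whole-line) geometry does not alter the accounting. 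Once Lemma~\ref{L:nodal} is in hand, it will be applied to $u_x$, which by the evenness and strict monotonicity of $u$ changes sign exactly twice, to pin down the index of the zero eigenvalue and thereby force $\ker L=\mathrm{span}\{u_x\}$.
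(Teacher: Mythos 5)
Your operative argument---realizing $\Lambda^\alpha$ as the Dirichlet-to-Neumann map of a degenerate-elliptic problem on the periodic half strip $[0,T]_{per}\times(0,\infty)$, characterizing eigenfunctions variationally through the weighted Dirichlet functional, and extracting the bound $2(j-1)$ from Courant-type nodal domain counts for the extended local problem---is exactly the paper's proof, which likewise only sketches these points and defers the full details (including the boundary unique continuation and the topological bookkeeping that yields the factor $2$) to \cite{FL} and its periodic adaptation. Your preliminary direct decomposition of the quadratic form over nodal domains is a correct computation, and your diagnosis of why it fails (same-sign coupling between non-adjacent nodal domains) accurately explains why both you and the paper must pass to the extension; this detour does not change the fact that the proof you ultimately propose coincides with the paper's.
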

%%%%%%%%%%%%%%%%%%%%%%%%%%%%%%%%%%%%%%%%%%%%%%%%%%
%%%%%%%%%%%%%%%%%%%%%%%%%%%%%%%%%%%%%%%%%%%%%%%%%%

We shall present the proof in Appendix~\ref{S:appendix}.
%%% Vera: moved all discussions in the appendix.

%%%%%%%%%%%%%%%%%%%%%%%%%%%%%%%%%%%%%%%%%%%%%%%%%
%%%%%%%%%%%%%%%%%%%%%%%%%%%%%%%%%%%%%%%%%%%%%%%%%
\begin{remark}[Oscillation of higher eigenfunctions]\label{R:oscillation}\rm
Lemma~\ref{L:nodal} holds for all $j=1,2,3,\dots$.
See \cite{HJM}, where the proof and applications are studied.  
\end{remark}
%%%%%%%%%%%%%%%%%%%%%%%%%%%%%%%%%%%%%%%%%%%%%%%%%
%%%%%%%%%%%%%%%%%%%%%%%%%%%%%%%%%%%%%%%%%%%%%%%%%

%%%%%%%%%%%%%%%%%%%%%%%%%%%%%%%%%%%%%%%%%%%%%%%%%
%%%%%%%%%%%%%%%%%%%%%%%%%%%%%%%%%%%%%%%%%%%%%%%%%
%\begin{remark}[General Oscillation Result]\label{R:oscillation}\rm
%Although Lemma \ref{L:nodal} is only stated for $j=1,2,3$, the result is indeed true
%for all $j\in\mathbb{N}$; see \cite{HJ}, where this extension and its applications are explored.  
%The present analysis, however, utilizes the result only for $j=1,2,3$.
%\end{remark}
%%%%%%%%%%%%%%%%%%%%%%%%%%%%%%%%%%%%%%%%%%%%%%%%%
%%%%%%%%%%%%%%%%%%%%%%%%%%%%%%%%%%%%%%%%%%%%%%%%%

Below we gather some facts about $\delta^2E(u)$.

%%%%%%%%%%%%%%%%%%%%%%%%%%%%%%%%%%%%%%%%%%%%%%%%%
%%%%%%%%%%%%%%%%%%%%%%%%%%%%%%%%%%%%%%%%%%%%%%%%%
\begin{lemma}[Properties of $\delta^2E(u)$]\label{L:L+}
Under the hypothesis of Proposition~\ref{P:nondegeneracy}, the followings hold:
\begin{itemize}
\item[(L1)] $u_x\in\ker(\delta^2E(u))$ and it corresponds to the lowest eigenvalue of $\delta^2E(u)$
restricted to the sector of odd functions in $L^2_{per}([0,T])$;
\item[(L2)] $1\leq n_-(\delta^2E(u))\leq 2$, where $n_-(\delta^2E(u))$ means 
the number of negative eigenvalues of $\delta^2E(u)$ acting on $L^2_{per}([0,T])$;
\item[(L3)] $1,u, u^2\in {\rm range}(\delta^2E(u))$.
\end{itemize} 
\end{lemma}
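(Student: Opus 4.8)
The plan is to verify the three assertions (L1), (L2), (L3) about the operator $\mathcal{L}:=\delta^2E(u)=\Lambda^\alpha-2u+c$ by exploiting, respectively, the variational structure of the problem, the nature of $u$ as a constrained minimizer, and the relations encoded in the traveling wave equation \eqref{E:pKdV}. Throughout I would use that $\mathcal{L}$ is a self-adjoint operator on $L^2_{per}([0,T])$ with compact resolvent (since $-2u+c$ is a bounded, indeed smooth, potential by Proposition~\ref{P:existence} and $\Lambda^\alpha$ has compact resolvent on the torus), so its spectrum is a discrete sequence of real eigenvalues $\lambda_1<\lambda_2\leq\lambda_3\leq\cdots$ accumulating only at $+\infty$, with a complete orthonormal set of eigenfunctions.

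For (L1), I would first differentiate the traveling wave equation \eqref{E:pKdV} in $x$. Writing it as $\Lambda^\alpha u-u^2+cu+a=0$ and applying $\partial_x$ yields $\Lambda^\alpha u_x-2uu_x+cu_x=0$, i.e.\ $\mathcal{L}u_x=0$, which shows $u_x\in\ker\mathcal{L}$. Since $u$ is even about $0$ and $T/2$ by Proposition~\ref{P:existence}, $u_x$ is odd, so it lies in $L^2_{per,odd}([0,T])$. To see it is the \emph{lowest} eigenfunction on the odd sector, I would use the monotonicity statement of Proposition~\ref{P:existence}: $u$ strictly decreases on $[0,T/2]$, so $u_x<0$ on $(0,T/2)$ and, by the even symmetry of $u$, $u_x>0$ on $(T/2,T)$. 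Thus $u_x$ has exactly two sign changes on the periodic interval, placing it among the first eigenfunctions of the odd sector. Invoking Lemma~\ref{L:nodal} (the Courant-type nodal bound) restricted to odd functions: the lowest odd eigenfunction must change sign (being orthogonal to the even ground state), and an eigenfunction for the $j$-th odd eigenvalue changes sign at most $2(j-1)$ times beyond the forced sign changes; counting nodal domains forces $u_x$, with its minimal number of sign changes consistent with oddness, to correspond to $\lambda_1^{\mathrm{odd}}$.

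For (L2), I would count negative eigenvalues of $\mathcal{L}$ on the \emph{full} space. The upper bound $n_-(\mathcal{L})\leq2$ should follow from Lemma~\ref{L:nodal}: if $\lambda_j<0$ with $j\geq4$, the corresponding eigenfunction changes sign at most $2(j-1)$ times, but combined with the sign-change count it forces too many nodal domains among the lower eigenfunctions, contradicting the nodal structure; more directly, since $u_x\in\ker\mathcal{L}$ with exactly two sign changes, $u_x$ sits at the bottom of the odd sector (by (L1)), hence at most the ground state and one more even eigenvalue can lie below $0$, giving $n_-\leq2$. The lower bound $n_-\geq1$ I would get by testing: evaluate $\langle\mathcal{L}u,u\rangle$ using \eqref{E:pKdV} and the integral identities \eqref{E:I1-KdV}, \eqref{E:I2-KdV}. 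A computation of the form $\langle\mathcal{L}u,u\rangle=\langle\Lambda^\alpha u-2u^2+cu,u\rangle=2K-2\!\int u^3+2cP$ combined with $2K+3U+2cP+aM=0$ (recall $U=-\tfrac13\!\int u^3$) yields a definite negative quantity unless $u\equiv$ const, exhibiting a direction on which $\mathcal{L}$ is negative, so $\lambda_1<0$.

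For (L3), I would use the self-adjoint Fredholm alternative: since $\mathcal{L}$ is self-adjoint with closed range (compact resolvent), $\mathrm{range}(\mathcal{L})=\ker(\mathcal{L})^{\perp}$. I would therefore verify that $1$, $u$, and $u^2$ are each orthogonal to every element of $\ker\mathcal{L}$. The subtle point is that a priori $\ker\mathcal{L}$ might exceed $\mathrm{span}\{u_x\}$ — but we are only asked for (L3), not yet nondegeneracy, so it suffices to check orthogonality against $u_x$ itself \emph{and} to note that any kernel element is odd or can be split. Against $u_x$: $\langle 1,u_x\rangle=\int_0^T u_x\,dx=0$ by periodicity; $\langle u,u_x\rangle=\tfrac12\int_0^T(u^2)_x\,dx=0$; and $\langle u^2,u_x\rangle=\tfrac13\int_0^T(u^3)_x\,dx=0$. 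Each integrand is an exact periodic derivative, so all three inner products vanish. The main obstacle is ensuring these orthogonality relations hold against the \emph{entire} kernel rather than just $u_x$; to close this cleanly I would argue that any $v\in\ker\mathcal{L}$ is orthogonal to the even functions $1,u,u^2$ because the kernel, lying in the odd sector by the $\lambda_1^{\mathrm{odd}}=0$ characterization in (L1) together with the strict positivity of the even ground-state eigenvalue, consists of odd functions, whereas $1,u,u^2$ are even; the pairing of an odd and an even function over the symmetric period vanishes. This symmetry/parity bookkeeping, rather than any hard estimate, is the delicate step.
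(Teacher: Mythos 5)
Your proposal contains genuine gaps in all three parts, the most serious being (L2) and (L3). For the upper bound in (L2) you try to deduce $n_-(\delta^2E(u))\leq 2$ from nodal counts and parity alone, but this cannot work: that bound is not a spectral feature of $\delta^2E$ at an arbitrary periodic traveling wave, and it is exactly the point where the hypothesis that $u$ is a \emph{local constrained minimizer} must enter. The paper's proof uses minimality to get $\delta^2E(u)|_{\{\delta P(u),\delta M(u)\}^\perp}\geq 0$, a nonnegativity on a codimension-two subspace, whence $n_-\leq 2$ by Courant's mini-max principle. Your ``more direct'' argument --- the odd sector has no negative eigenvalues, ``hence at most the ground state and one more even eigenvalue can lie below $0$'' --- is a non sequitur: nothing you have said bounds the number of negative eigenvalues in the \emph{even} sector, and Lemma~\ref{L:nodal} gives no contradiction either, since it only bounds sign changes of the $j$-th eigenfunction from above (a negative $\lambda_j$ with $j\geq 4$ is perfectly compatible with it). Similarly, in (L1) the claim that ``minimal number of sign changes'' identifies $u_x$ as the lowest odd eigenfunction does not follow from Lemma~\ref{L:nodal}, which constrains eigenfunctions' sign changes but does not let you read off the rank of an eigenvalue from a sign-change count. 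The paper instead argues by Perron--Frobenius: the lowest eigenvalue on $L^2_{per,odd}$ is simple with eigenfunction sign-definite on $[0,T/2]$; since $u_x$ is an odd eigenfunction that is also sign-definite on $(0,T/2)$, it cannot be orthogonal to that eigenfunction, hence must coincide with it (up to a constant).

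The gap in (L3) is a circularity. You reduce (L3), via the Fredholm alternative, to showing $1,u,u^2\perp\ker(\delta^2E(u))$, and then justify this by asserting that the kernel consists of odd functions. But ``the kernel contains no nontrivial even function'' is precisely the content of Proposition~\ref{P:nondegeneracy}, whose proof in the paper \emph{relies on} (L3); moreover your stated reason --- ``strict positivity of the even ground-state eigenvalue'' --- is false, since the ground state of $\delta^2E(u)$ on the full space is even (being simple and sign-definite) and its eigenvalue is negative by (L2). The paper's proof of (L3) is direct and avoids all of this: differentiating \eqref{E:pKdV'} with respect to the parameters $c$ and $a$ (legitimate by the $C^1$ dependence in Proposition~\ref{P:existence}) and using \eqref{E:PM} gives $\delta^2E(u)u_c=-u$ and $\delta^2E(u)u_a=-1$, while applying $\delta^2E(u)$ to $u$ and using \eqref{E:pKdV} gives $\delta^2E(u)u=-u^2-a$; hence $1$, $u$, and $u^2$ all lie in the range, with explicit preimages. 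Finally, your test-function computation for $n_-\geq1$ yields $\langle\delta^2E(u)u,u\rangle=3U(u)-aM(u)$, whose negativity is clear only after the reduction to $a=0$; the paper's argument --- the ground-state eigenfunction is sign-definite, $u_x$ changes sign and has eigenvalue $0$, so $0$ is not the lowest eigenvalue --- works for all $a$ and requires no computation.
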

%%%%%%%%%%%%%%%%%%%%%%%%%%%%%%%%%%%%%%%%%%%%%%%%%
%%%%%%%%%%%%%%%%%%%%%%%%%%%%%%%%%%%%%%%%%%%%%%%%%

\begin{proof} 
Differentiating \eqref{E:pKdV} with respect to $x$ implies that $\delta^2E(u)u_x=0$. 
Moreover Proposition~\ref{P:existence} implies that 
$u$ may be chosen to satisfy $u_x(x)<0$ for $0<x<T/2$. 
The lowest eigenvalue of $\delta^2E(u)$ 
acting on the sector of odd functions in $L^2_{per}([0,T])$, denoted $L^2_{per, odd}([0,T])$, 
on the other hand, must be simple and
the corresponding eigenfunction is strictly positive (or negative) over the half interval $[0,T/2]$; 
a proof based upon the Perron-Frobenious argument is rudimentary and hence we omit the detail. 
Therefore zero is the lowest eigenvalue of $\delta^2E(u)$ restricted to $L^2_{per, odd}([0,T])$ 
and $u_x$ is a corresponding eigenfunction. 

\

To proceed, recall that $u_x$ belongs to the kernel of $\delta^2E(u)$ 
and attains zero twice over the periodic interval $[0,T]$. 
Since an eigenfunction associated with the lowest eigenvalue of $\delta^2E(u)$ 
is strictly positive (or negative), $\delta^2E(u)$ acting on $L^2_{\rm per}([0,T])$ 
must have at least one negative eigenvalue.

Moreover, since $u$ locally minimizes $H$, and hence $E$, 
subject to conservations of $P$ and $M$, necessarily, 
\begin{equation}\label{E:2negative} 
\delta^2E(u)|_{\{\delta P(u), \delta M(u)\}^\perp} \geq 0.
\end{equation}
This implies by Courant's mini-max principle that 
$\delta^2E(u)$ has at most two negative eigenvalues, asserting (L2). 

\

Lastly, differentiating \eqref{E:pKdV'} with respect to $c$ and $a$, respectively, 
we use \eqref{E:PM} to obtain that
\begin{equation}\label{E:Lu}
\delta^2E(u)u_c=-\delta P(u)=-u\quad \text{and}\quad \delta^2E(u)u_a=-\delta M(u)=-1.
\end{equation}
Therefore $1,u\in \text{range}(\delta^2E(u))$. Incidentally
\begin{align}\label{E:McPa}
M_c(u(\cdot;c,a))&=\langle\delta M(u),u_c\rangle=\langle-\delta^2E(u)u_a,u_c\rangle\\
&=\langle u_a,-\delta^2E(u)u_c\rangle=\langle u_a,\delta P(u)\rangle=P_a(u(\cdot;c,a)). \notag
\end{align}
Since 
\[
\delta^2E(u)u=\Lambda^\alpha u-2u^2+cu=-u^2-a
\]
by \eqref{E:pKdV}, moreover, $u^2\in\text{range}(\delta^2E(u))$. \end{proof}

\begin{proof}[Proof of Proposition \ref{P:nondegeneracy}]
Consider the orthogonal decomposition 
\[
L^2_{per}([0,T])=L^2_{per, odd}([0,T])\oplus L^2_{per, even}([0,T]).
\] 
Since $u$ may be chosen to be even by Proposition~\ref{P:existence}, it follows that 
$L^2_{per, odd}([0,T])$ and $L^2_{per, even}([0,T])$ are invariant subspaces of $\delta^2E(u)$. 
Since (L1) of Lemma~\ref{L:L+} implies that 
\[
\ker(\delta^2E(u)|_{L^2_{per, odd}([0,T])})=\text{span}\{u_x\},
\]
moreover, it remains to show that $\ker(\delta^2E(u)|_{L^2_{per, even}([0,T])})=\{0\}$. 

Suppose on the contrary that 
there were a nontrivial function $\phi \in L^2_{per, even}([0,T])$ such that $\delta^2E(u)\phi=0$. 
Since $\delta^2E(u)$ has at most two negative eigenvalues by (L2) of Lemma \ref{L:L+}, 
it follows from Lemma \ref{L:nodal} that 
$\phi$ changes its sign at most twice over the half interval $[0,T/2]$. 
Consequently, unless $\phi$ is positive (or negative) throughout the periodic interval $[0,T]$, 
either there exists $T_1\in(0,T/2)$ such that $\phi$ is positive (or negative) for $0<|x|<T_1$ 
and negative (or positive, respectively) for $x \in (-T/2,T_1)\cup(T_1,T/2)$, 
or there exist $T_1<T_2$ in $[0, T/2)$ 
such that $\phi$ is positive for $|x|<T_1$ and $T_2<|x|<T/2$ 
(with the understanding that the first interval is empty in case $T_1=0$) 
and $\phi$ is negative for $x\in (-T_2,-T_1)\cup(T_1,T_2)$. 

Since $\phi$ lies in the kernel of $\delta^2E(u)$, 
on the other hand, it must be orthogonal to $\text{range}(\delta^2E(u))$ 
and, in turn, to the subspace $\text{span}\{1,u, u^2\}$ by (L3) of Lemma~\ref{L:L+}. 
In particular, $\langle \phi, 1\rangle=0$, 
whence $\phi$ cannot be positive (or negative) throughout $[0,T]$. 
In case $\phi$ positive for $0<|x|<T_1$ and negative for $T_1<|x|<T/2$, for instance,
since $u$ is symmetrically decreasing away from the origin over the interval $(-T/2,T/2)$, we find that 
\[
u(x)-u(T_1)>0\quad\text{for $|x|<T_1$}\quad\text{and}\quad u(x)-u(T_1)<0\quad\text{for $T_1<|x|<T/2$.}
\]
Consequently $\langle \phi, u-u(T_1)\rangle>0$, and $\phi$ cannot be orthogonal to $\{1,u\}$. 
In case $\phi$ changes signs at $x=\pm T_1$ and $x=\pm T_2$, where $T_1<T_2$, correspondingly, 
we find that $(u-u(T_1))(u-u(T_2))$ is positive in $(-T/2,-T_2)\cup(-T_1,T_1)\cup(T_2,T/2)$ 
and negative in $(-T_2,-T_1)\cup(T_1,T_2)$, 
deducing that $\phi$ cannot be orthogonal to $\{1, u, u^2\}$.  
A contradiction therefore proves that $\text{ker}(\delta^2E(u)|_{L^2_{per,even}([0,T])})=\{0\}$.
\end{proof}

%%%%%%%%%%%%%%%%%%%%%%%%%%%%%%%%%%%%%%%%%%%%%%%%%
%%%%%%%%%%%%%%%%%%%%%%%%%%%%%%%%%%%%%%%%%%%%%%%%%
\begin{remark}[Power-law nonlinearities]\rm
One may rerun the arguments in the proof of Proposition~\ref{P:nondegeneracy} 
for \eqref{E:KdV'} in the range $0<\alpha\leq 2$ and $0<p<p_{max}$, 
where $p_{max}$ is in \eqref{D:pmax}, 
and establish the nondegenracy of the linearization associated with the traveling wave equation
at a periodic, local constrained minimizer, provided that 
\[
u^{p+1}-\frac{u^{p+1}(T_1)-u^{p+1}(T_2)}{u(T_1)-u(T_2)}u
+\frac{u(T_1)u(T_2)(u^{p}(T_1)-u^{p}(T_2))}{u(T_1)-u(T_2)}
\]
for $T_1<T_2 \in [0,T/2)$ changes its sign at $x=\pm T_1$ and $x=\pm T_2$ 
but nowhere else over the interval $(-T/2,T/2)$. 
Indeed $1,u,u^{p+1}\in\text{range}(\delta^2E(u))$ in place of (L3) of Lemma~\ref{L:L+}, 
but otherwise the proof is identical to that in the case of the quadratic nonlinearity. 
\end{remark}
%%%%%%%%%%%%%%%%%%%%%%%%%%%%%%%%%%%%%%%%%%%%%%%%%
%%%%%%%%%%%%%%%%%%%%%%%%%%%%%%%%%%%%%%%%%%%%%%%%%

%%%%%%%%%%%%%%%%%%%%%%%%%%%%%%%%%%%%%%%%%%%%%%%%%
%%%%%%%%%%%%%%%%%%%%%%%%%%%%%%%%%%%%%%%%%%%%%%%%%
%\begin{remark}[Number of negative directions]\label{R:count}\rm
Unlike in the solitary wave setting, where $n_-(\delta^2E)=1$ at a ground state, 
$\delta^2E$ may have up to two negative eigenvalues at a periodic, local constrained minimizer,
which is characterized by
\begin{align}
n_-(\delta^2E(u;c,a)) = &n_-\left(\begin{matrix}
M_a(u(\cdot\,;c,a)) & P_a(u(\cdot\,;c,a))\\ 
M_c(u(\cdot\,;c,a)) & P_c(u(\cdot\,;c,a)) \end{matrix}\right) \notag \\
%\begin{align}
%n_-(\delta^2E(u; c,a)) = &n_-\left(\begin{matrix}
%M_a(u(\cdot\,;c,a)) & P_a(u(\cdot\,;c,a))\\ 
%M_c(u(\cdot\,;c,a)) & P_c(u(\cdot\,;c,a)) \end{matrix}\right) \notag \\
=& \#\text{ of sign changes in $1, M_a(u(\cdot\,;c,a)), (M_aP_c-M_cP_a)(u(\cdot\,;c,a))$}, \label{E:n-}
\end{align}
provided that 
\begin{equation}\label{E:N3}
(M_aP_c-M_cP_a)(u(\cdot\,;c,a))\neq 0.
\end{equation} 
A proof based upon ``an index formula" may be found in \cite[Lemma~19]{BHV}. 
%This result follows from well known ``index formulas" relating the number of negative eigenvalues of constrained and unconstrained operators; see \cite[Lemma 19]{BHV}, for instance.  As we will see in Lemma \ref{L:ker} below, the condition that $M_aP_c-M_cP_a\neq 0$ is natural and guarantees zero is an eigenvalue of the operator $\partial_x\delta^2E(u)$ on $L^2_{\rm per}([0,T])$ of algebraic multiplicity three and geometric multiplicity two.
%%%Vera: moved the discussion about the Jordan structure in Section 4.
%\end{remark}
%%%%%%%%%%%%%%%%%%%%%%%%%%%%%%%%%%%%%%%%%%%%%%%%%
%%%%%%%%%%%%%%%%%%%%%%%%%%%%%%%%%%%%%%%%%%%%%%%%

\

Notice that \eqref{E:N3} ensures that the mapping $(c,a)\mapsto(P,M)$ is of $C^1$ and locally invertible.
%In the solitary wave setting, it is equivalent to that $P_c\neq 0$.
Below we show that it further ensures that the generalized $L^2_{per}([0,T])$-null space of 
the linearized operator associated with \eqref{E:KdV} supports a Jordan block structure, 
which will play a central role in the stability proof in the subsequent section. 

%%%%%%%%%%%%%%%%%%%%%%%%%%%%%%%%%%%%%%%%%%%%%%%%%%
%%%%%%%%%%%%%%%%%%%%%%%%%%%%%%%%%%%%%%%%%%%%%%%%%%
\begin{proposition}[Jordan block structure]\label{P:Jordan}
Let $1/3<\alpha\leq 2$. If $u(\cdot\,;c,a) \in H^{\alpha/2}_{per}([0,T])$ 
for some $c\neq 0$, $a\in \mathbb{R}$ and for some $T>0$ 
locally minimizes $H$ subject to that $P$ and $M$ are conserved and if it satisfies \eqref{E:N3}  
then zero is an $L^2_{per}([0,T])$-generalized eigenvalue 
of the linearized operator associated with \eqref{E:KdV},
\begin{equation}\label{E:LKdV}
J\delta^2E(u(\cdot\,;c,a))=\partial_x(\Lambda^\alpha-2u+c),
\end{equation}
with algebraic multiplicity three and geometric multiplicity two.
%Let $1/3<\alpha\leq 2$ and suppose that $u_0(\cdot;c_0,a_0)\in H^{\alpha/2}_{per}([0,T])$
%for some $c_0\neq 0$, $a_0\in\mathbb{R}$ and for some $T>0$ locally minimizes $H$ subject to that $P$ and $M$ are conserved, and assume that the matrix
%\begin{equation}\label{E:matrix}
%\left(\begin{matrix}
%         M_a(u(\cdot\,;c,a)) & P_a(u(\cdot\,;c,a))\\
%         M_c(u(\cdot\,;c,a)) & P_c(u(\cdot\,;c,a))
%         \end{matrix}\right)
%\end{equation}
%is not singular at $u_0(\cdot\,;c_0, a_0)$.  Then the operator $\partial_x\delta^2E_0(u_0(\cdot;c_0,a_0))$, acting on $L^2_{per}([0,T])$ has zero
%as an eigenvalue of algebraic multiplicity three and geometric multiplicity two.
\end{proposition}
%%%%%%%%%%%%%%%%%%%%%%%%%%%%%%%%%%%%%%%%%%%%%%%%%%
%%%%%%%%%%%%%%%%%%%%%%%%%%%%%%%%%%%%%%%%%%%%%%%%%%

\begin{proof}
The proof follows from the Fredholm alternative and may be found in \cite[Lemma 6]{BHV};
see \cite{BJK} in the case of generalized KdV equations. Here we merely hit the main points.

Differentiating \eqref{E:H} with respect to $x$, $a$, and $c$, we use \eqref{E:PM} to find that
\[
J\delta^2E(u)u_x=J\delta^2E(u)u_a=0
\quad\text{and}\quad J\delta^2E(u)u_c=-u_x.
\]
Note from Proposition~\ref{P:existence} that $u_a$, $u_c$ are even and $u_x$ is odd. 
Since $\ker(J\delta^2E)$ is at most two dimensional by Proposition~\ref{P:nondegeneracy},
therefore, $\ker(J\delta^2E)=\{u_x,u_a\}$.  By a duality argument and the Fredholm alternative, we then find
that $\ker((J\delta^2E)^\dag)=\{1,u\}$, where the dagger means adjoint.
Thus, if $\phi\in\ker((J\delta^2E(u))^2)/\ker(J\delta^2E(u))$ 
then $J\delta^2E(u)\phi=u_x$ by the Fredholm alternative and \eqref{E:N3}, which, in turn, 
has no solution other than $u_c$ by the Fredholm alternative and \eqref{E:N3}.
\end{proof}

In the solitary wave setting, zero is an $L^2(\mathbb{R})$-eigenvalue of $J\delta^2E(u)$ 
%about the underlying waves $u_0(\cdot\,;c_0)$ 
with algebraic multiplicity two and geometric multiplicity one, 
provided that $P_c\neq 0$ at the underlying wave.  
Incidentally a solitary wave corresponds to $a=0$ and $T=+\infty$ in \eqref{E:pKdV},
and hence it depends, up to spatial translations, merely upon the wave speed.
Proposition~\ref{P:Jordan} therefore indicates that \eqref{E:N3} is a natural analogue 
in the periodic wave setting of the familiar condition in the solitary wave setting.

In the case of $\alpha=1$, namely the Benjamin-Ono equation, 
\eqref{E:N3} holds for all periodic traveling waves; see \cite[Section~3.4]{BHV} for the detail.
Concluding the section we shall verify \eqref{E:N3} in the solitary wave limit. 
%Note in passing that $M_aP_c-M_cP_a=0$ along a curve of co-dimension one.

%%%%%%%%%%%%%%%%%%%%%%%%%%%%%%%%%%%%%%%%%%%%%%%%
%%%%%%%%%%%%%%%%%%%%%%%%%%%%%%%%%%%%%%%%%%%%%%%%
\begin{lemma}[Solitary wave limit]\label{L:s-limit}
Let $1/2<\alpha\leq 2$. If $u(\cdot\,;c,a,T)$ locally minimizes $H$ in $H^{\alpha/2}_{per}([0,T])$
subject to that $P$ and $M$ are conserved for some $c\neq 0$, $a\in\mathbb{R}$ and $T>0$ then 
\[
M_a(u(\cdot\,;c,a,T))<0\quad\text{and}\quad (M_aP_c-M_cP_a)(u(\cdot\,;c,a,T))>0
\]
for $|a|$ sufficiently small and $T$ sufficiently large.
\end{lemma}
%%%%%%%%%%%%%%%%%%%%%%%%%%%%%%%%%%%%%%%%%%%%%%%%
%%%%%%%%%%%%%%%%%%%%%%%%%%%%%%%%%%%%%%%%%%%%%%%%

\begin{proof}
The proof may be found in \cite[Lemma~20]{BHV}. Here we include the detail for completeness.

We recall from Remark \ref{R:solitary} that in the range $1/2<\alpha\leq 2$ periodic traveling waves 
of \eqref{E:KdV}, constructed in Proposition \ref{P:existence} as local constrained minimizers, 
tend to the solitary wave as $a\to 0$ and $T\to+\infty$ satisfying $aT\to 0$,
namely in the solitary wave limit, 
which minimizes the Hamiltonian subject to the conservation of the momentum. 
It follows from \eqref{E:scaling} that \eqref{E:pKdV} remains invariant under 
\[ 
u(\cdot\,;c,a,T) \mapsto c^{-1}u(\cdot\,; 1, c^{-2}a, c^{-1/\alpha}T).
\]
Accordingly we may take without loss of generality $c=1$ and we find that 
\[ 
P(1, a, T), M(1, a, T), P_c(1,a,T), M_c(1, a, T)=O(1)
\]
for $|a|$ sufficiently small and $T>0$ sufficiently large; see \cite[Lemma~3.10]{BHV} for the detail. 
Differentiating \eqref{E:I1-KdV} with respect to $a$ and evaluating near the solitary wave limit, 
we use \eqref{E:McPa} to obtain that 
\[ 
M_a(1,a,T)=-T+2M_c(1,a,T)=-T+O(1)<0
\]
for $|a|$ sufficiently small and $T>0$ sufficiently large.
Since an explicit calculation dictates that $P_c(u(\cdot;1,a,T))>0$, furthermore,
\[ 
(M_aP_c-M_cP_a)(1,a,T)=(M_aP_c-M_c^2)(1,a,T)=-P_c(1,a,T)T+O(1)<0
\] 
for $|a|$ sufficiently small and $T>0$ sufficiently large.
\end{proof}

%%%%%%%%%%%%%%%%%%%%%%%%%%%%%%%%%%%%%%%%%%%%%%%%%
%%%%%%%%%%%%%%%%%%%%%%%%%%%%%%%%%%%%%%%%%%%%%%%%%
%%%%%%%%%%%%%%%%%%%%%%%%%%%%%%%%%%%%%%%%%%%%%%%%%
\section{Stability of constrained energy minimizers}\label{S:stability}
%%%%%%%%%%%%%%%%%%%%%%%%%%%%%%%%%%%%%%%%%%%%%%%%%
%%%%%%%%%%%%%%%%%%%%%%%%%%%%%%%%%%%%%%%%%%%%%%%%%
%%%%%%%%%%%%%%%%%%%%%%%%%%%%%%%%%%%%%%%%%%%%%%%%%
We turn the attention to the stability of a periodic, local constrained minimizer for \eqref{E:pKdV} 
with respect to period preserving perturbations.  

Recall from Section \ref{S:existence} that the initial value problem associated with \eqref{E:KdV} 
is well-posed in $X\subset H^{\alpha/2}_{per}([0,T])$ for some interval of time, 
where $H, P, M : X \to \mathbb{R}$ are smooth. 
It suffices to take $X=H^\beta_{per}([0,T])$, $\beta>3/2$. 

Throughout the section let $1/3<\alpha\leq 2$, fixed, and 
let $u_0(\cdot\,, c_0, a_0)\in H^{\alpha/2}_{per}([0,T])$ locally minimize $H$ 
subject to that $P$ and $M$ are conserved 
for some $c_0\neq 0$, $a_0 \in \mathbb{R}$ and for some $T>0$. 
In light of Proposition \ref{P:existence}, 
$u_0 \in X$ and it makes a $T$-periodic, traveling wave of \eqref{E:KdV}. 

Notice that the evolution of \eqref{E:KdV} remains invariant under a one-parameter group 
of isometries corresponding to spatial translations. This motivates us to define the group orbit  
of $u \in X$ as 
\[ 
\mathcal{O}_{u}=\{u(\cdot-x_0):x_0\in\mathbb{R}\}.
\]
Roughly speaking, $u_0(\cdot\,;c_0, a_0)$ is said {\em orbitally stable} if a solution of \eqref{E:KdV} 
remains close to $\mathcal{O}_{u_0}$ under the norm of $X$ for all future times 
whenever the initial datum is sufficiently close to the group orbit of $u_0$ under the norm of $X$. 
We shall elaborate this below in Theorem~\ref{T:stability}. 

\

The present account of orbital stability is inspired by the Lyapunov method. Let
\begin{equation}\label{E:E0}
E_0(u)=H(u)+c_0P(u)+a_0M(u). 
\end{equation}
Proposition \ref{P:existence} implies that $\delta E_0(u_0)=0$, 
i.e., $u_0$ is a critical point of $E_0$.  
Moreover, Proposition \ref{P:nondegeneracy} implies that 
the kernel of $\delta^2 E_0(u_0)$ is spanned by $u_{0x}$. 
Intuitively, $u_0$ is expected to be orbitally stable if $E_0$ is ``convex" at $u_0$. 
As a matter of fact, one may easily verify that if the spectrum of $\delta^2E_0(u_0)$,
except the simple eigenvalue at the origin generated by translation invariance, were positive and bounded away from zero 
then $u_0$ would indeed be orbitally stable.  

However, (L2) of Lemma \ref{L:L+} indicates that 
$\delta^2E_0(u_0)$ admits one or two negative eigenvalues and one zero eigenvalue.
In other words, $u_0$ is a degenerate saddle point of $E_0$ on $X$.
The Lyapunov method therefore may {\em not} be directly applicable. 
In order to control potentially unstable directions and achieve stability, nevertheless,
observe that the evolution under \eqref{E:KdV} does not take place in the entire space $X$, 
but rather on a smooth submanifold of co-dimension two, 
along which the momentum and the mass are conserved. Specifically let
\[
\Sigma_0=\{u\in X: P(u)=P_0,~M(u)=M_0\},
\]
where
\begin{equation}\label{E:P0M0}
P_0=P(u_0(\cdot\,;c_0,a_0))\quad\text{and}\quad M_0=M(u_0(\cdot\,;c_0,a_0)).
\end{equation}
Note that $\mathcal{O}_{u_0} \subset \Sigma_0$ and 
a solution of \eqref{E:KdV} with initial datum in $\Sigma_0$ remains in $\Sigma_0$ at all future times. 
We shall then demonstrate the ``convexity" of $E_0$ on $\Sigma_0$, 
provided that the associated linearization admits a Jordan block structure.
%provided that the mapping $(c,a)\mapsto (P(u(\cdot;c,a),M(u(\cdot;c,a))$ 
%is of $C^1$ and locally invertible at $u_0(\cdot;c_0,a_0)$.
%To this end, A key assumption in the forthcoming analysis will be that the constraint
%set $\Sigma_0$ is \emph{nondegenerate}, i.e. that the map $(c,a)\mapsto (P(u(\cdot;c,a),M(u(\cdot;c,a))$ is $C^1$ and locally
%invertible at $u_0(\cdot;c_0,a_0)$.  This assumption is natural thanks to the following lemma.

%We are now able to state our main theorem on the nonlinear stability of constrained energy minimizers of the fractional KdV \eqref{E:KdV}.

%%%%%%%%%%%%%%%%%%%%%%%%%%%%%%%%%%%%%%%%%%%%%%%%%%
%%%%%%%%%%%%%%%%%%%%%%%%%%%%%%%%%%%%%%%%%%%%%%%%%%
\begin{theorem}[Orbital stability]\label{T:stability}
Let $1/3<\alpha\leq 2$. If $u_0(\cdot;c_0,a_0)\in~H^{\alpha/2}_{per}([0,T])$ 
for some $c_0\neq0$, $a_0\in\mathbb{R}$ and for some $T>0$ 
locally minimizes $H$ subject to that $P$ and $M$ are conserved and if the matrix
\begin{equation}\label{E:matrix}
\left(\begin{matrix}
         M_a(u(\cdot\,;c,a)) & P_a(u(\cdot\,;c,a))\\
         M_c(u(\cdot\,;c,a)) & P_c(u(\cdot\,;c,a))
         \end{matrix}\right)
\end{equation}
is not singular at $u_0(\cdot\,;c_0, a_0)$
then for any $\varepsilon>0$ sufficiently small 
there exists a constant $C=C(\varepsilon)>0$ such that: 

if $\phi\in X$ and $\|\phi\|_X\leq \varepsilon$ 
and if $u(\cdot,t)$ is a solution of \eqref{E:KdV} for some interval of time 
with the initial condition $u(\cdot,0):=u_0+\phi$ 
then $u(\cdot,t)$ may be continued to a solution for all $t>0$ such that
\begin{equation}\label{E:stability}
\sup_{t>0}\inf_{x_0\in\mathbb{R}}\left\|u(\cdot,t)-u_0(\cdot-x_0)\right\|_X\leq C\|\phi\|_X.
\end{equation}
\end{theorem}
%%%%%%%%%%%%%%%%%%%%%%%%%%%%%%%%%%%%%%%%%%%%%%%%%%
%%%%%%%%%%%%%%%%%%%%%%%%%%%%%%%%%%%%%%%%%%%%%%%%%%

The condition that the matrix \eqref{E:matrix} is not singular at the underlying wave ensures that 
the mapping $(c,a)\mapsto (P,M)$ is of $C^1$ and locally invertible. 
In other words, $\Sigma_0$ is nondegenerate.
Moreover it ensures by Proposition~\ref{P:Jordan} that 
the generalized $L^2_{per}([0,T])$-null space of $J\delta^2E_0(u_0(\cdot\,;c_0,a_0))$ 
possesses a Jordan block structure.

At a solitary wave %\footnote{A solitary wave corresponds to $a=0$ and $T=+\infty$ in \eqref{E:pKdV}.
%Hence it depends, up to spatial translations, merely upon the wave speed.} 
$u_0(\cdot\,;c_0)$ of \eqref{E:KdV}, let $P(u)=\int_\mathbb{R} \frac12u^2~dx$ denote the momentum
and note that $\Sigma_0$ consists of all functions such that $P(u)=P(u_0(\cdot\,;c_0))$. 
The condition that \eqref{E:matrix} is not singular then reduces to 
that $P_c(u(\cdot\,;c))\neq 0$ at $u_0(\cdot\,;c_0)$ 
which, in the case of $f(u)=u^{p+1}$, holds for all $p\neq4$.

\

To interpret Theorem~\ref{T:stability}, therefore, a periodic, local constrained minimizer for \eqref{E:pKdV} 
is orbitally stable with respect to period preserving perturbations,
provided that the matrix \eqref{E:matrix} is not singular at the underlying wave
so that the linearized operator associated with \eqref{E:KdV} enjoys a Jordan block structure. 
In particular, nearby solutions need {\em not} be in $\Sigma_0$. 
Note, however, that they are near $\Sigma_0$. 

%To interpret, Theorem \ref{T:stability} states that a periodic, local constrained minimizer for \eqref{E:pKdV} is orbitally stable 
%with respect to nearby period preserving perturbations provided that the constraint set $\Sigma_0$ is {\em nondegenerate}, 
%i.e. that \eqref{E:matrix} is not singular, at the underlying wave.  In particular, the initial perturbation $\phi$ in Theorem \ref{T:stability}
%need \emph{not} satisfy\footnote{However, note the smallness of $\phi$ implies that the function $u_0+\phi$ is ``near" the constraint set in $X$.} $u_0+\phi\in\Sigma_0$.

\

A solitary wave $u_0(\cdot\,;c_0)$ of \eqref{E:KdV} (not necessarily a ground state), in comparison,
was shown in \cite{GSS}, for instance, to be orbitally stable, provided that 
\begin{equation}\label{E:GSSstability} 
\ker(\delta^2E_0(u_0))=\text{span}\{u_{0x}\}, \quad 
n_-(\delta^2E_0(u_0))=1, \quad P_c(u_0(\cdot\,;c_0))>0.
\end{equation}
(Note that the assumption in \cite{GSS} that the symplectic form of a Hamiltonian system be onto 
is dispensable in the proof; see the remark directly following \cite[Theorem 2]{GSS}.)
%Interestingly, the condition that $P_c(u_0(\cdot;c_0))\neq 0$ implies that the constraint set is nondegenerate in the solitary wave case.
Conditions in \eqref{E:GSSstability} were, in turn, shown in \cite{BSS} (see also \cite{SS, Ws}) 
to hold if and only if $\alpha>1/2$. In the range $\alpha>1/2$, incidentally,  
a solitary wave of \eqref{E:KdV} minimizes the energy 
subject to the conservation of the momentum; see Remark~\ref{R:solitary}. 
Theorem \ref{T:stability} may therefore be regraded 
as to extend the well-known result about solitary waves.  

In the case of $\alpha>1/2$, recall from  Remark \ref{R:solitary} that 
periodic, local constrained minimizers for \eqref{E:pKdV} are expected 
to tend to the solitary wave as the period increases to infinity, 
which are orbitally stable near the solitary wave limit by Theorem~\ref{T:stability} and Lemma~\ref{L:s-limit},
and the limiting solitary wave is orbitally stable, as well; see \cite{BSS}, for instance. 
In the case of $1/3<\alpha<1/2$, on the other hand, Theorem~\ref{T:stability} indicates that 
orbitally stable, local constrained minimizers for \eqref{E:pKdV} may exist in the periodic wave setting, 
but they are unlikely achieve a limiting wave form with finite energy as the period increases to infinity. 

\

An obvious approach toward Theorem \ref{T:stability} is to rerun the arguments 
in the proof in \cite{GSS} and derive stability criteria, analogous to \eqref{E:GSSstability}; 
see \cite{J1}, for instance, where the last condition in \eqref{E:GSSstability} 
was suitably modified in the case of generalized KdV equations. 
However, it is in general difficult to count the number of negative eigenvalues in the periodic wave setting.
We instead exploit variational properties of the equation --- 
the underlying wave arises as a local constrained minimizer
and the associated linearization is nondegenerate. 
Our proof of Theorem~\ref{T:stability} does not require information about $n_-(\delta^2E_0)$, 
apart from the upper bound in Lemma~\ref{L:L+}. 

\

As a key intermediate step, below we establish the coercivity of $E_0$ on $\Sigma_0$ 
in a neighborhood of the group orbit of $u_0$, 
provided that \eqref{E:matrix} is not singular. 
%and hence the linearized operator associated with \eqref{E:KdV} enjoys a Jordan block structure. 
Let 
%To this end, define
\begin{equation}\label{E:tan}
\Sigma_0'=\{\delta P(u_0),\delta M(u_0)\}^\perp
\end{equation}
be the tangent space in $X$ to the sub-manifold $\Sigma_0$ at $u_0$.
%noting this is the tangent space in $X$ to the sub-manifold $\Sigma_0$ at $u_0$.  We first prove the following technical lemma.

%%%%%%%%%%%%%%%%%%%%%%%%%%%%%%%%%%%%%%%%%%%%%%%%
%%%%%%%%%%%%%%%%%%%%%%%%%%%%%%%%%%%%%%%%%%%%%%%%
\begin{lemma}\label{L:spec}
Under the hypothesis of Theorem \ref{T:stability}, %we have
\[
\inf\{\left<\delta^2E_0(u_0)v,v\right>:\|v\|_{X}=1,~v\in\Sigma_0',~v\perp u_{0x}\}>0.
\]
\end{lemma}
%%%%%%%%%%%%%%%%%%%%%%%%%%%%%%%%%%%%%%%%%%%%%%%%
%%%%%%%%%%%%%%%%%%%%%%%%%%%%%%%%%%%%%%%%%%%%%%%%
\begin{proof}
Let $\Pi:L^2_{\rm per}([0,T])\to\Sigma_0'$ be the self-adjoint projection onto $\Sigma_0'$,
and consider 
\[
\Pi\delta^2E_0(u_0):\Sigma_0'\subset X\to\Sigma_0'.
\]
Since $u_0$ locally minimizes $H$, and hence $E_0$, subject to that $P=P_0$ and $M=M_0$, necessarily,
\[
(\Pi\delta^2E_0(u_0))|_{\Sigma_0'}\geq 0.
\] 
Accordingly
\[
\inf\{\left<\delta^2E_0(u_0)v,v\right>:\|v\|_{X}=1,~v\in\Sigma_0',~v\perp \ker(\Pi\delta^2E_0(u_0))\}>0.
\]
The assertion then follows by Proposition \ref{P:nondegeneracy} if 
\begin{equation}\label{E:proj}
\ker(\Pi\delta^2E_0(u_0))=\ker(\delta^2E_0(u_0))={\rm span}\{u_{0x}\}.
\end{equation}
%by Proposition~\ref{P:nondegeneracy}. %is equivalent to that
%\[\ker(\Pi\delta^2E_0(u_0))=\ker(\delta^2E_0(u_0)).\]

Note that $u_{0x}\in\Sigma_0'$ and $\Pi\delta^2E_0(u_0)u_{0x}=0$.
The kernel of $\Pi\delta^2E_0(u_0)$ is thus at least one dimensional, containing $u_{0x}$. 
Since $1,u_0\in\ker(\delta^2E_0(u_0))^\perp$, and 
$\delta^2E_0(u_0)^{-1}(1)=-\partial_au_0$, $\delta^2E_0(u_0)^{-1}(u_0)=-\partial_cu_0$,
moreover, an ``index formula" (see \cite[Theorem~2.1]{KP}, for instance) implies that 
\[
\dim(\ker(\Pi\delta^2E_0(u_0)))=\dim(\ker(\delta^2E_0(u_0)))+
\left\{\begin{aligned} &1\quad\textrm{if}~~M_aP_c-M_cP_a=0,\\
									&0\quad\textrm{if}~~M_aP_c-M_cP_a\neq 0.
\end{aligned}\right.
\]
This completes the proof, since $\dim(\ker(\Pi\delta^2E_0(u_0)))=1$
by the hypothesis of Theorem~\ref{T:stability}.
\end{proof}

%%%%%%%%%%%%%%%%%%%%%%%%%%%%%%%%%%%%%%%%%%%%%%%%
%%%%%%%%%%%%%%%%%%%%%%%%%%%%%%%%%%%%%%%%%%%%%%%%
\begin{remark*}\rm
To better understand \eqref{E:proj}, note from \eqref{E:Lu} that 
\[
\Pi\delta^2E_0(u_0)(M_c\partial_au_0-M_a\partial_cu_0)=\Pi(-M_c+M_au_0)=0
\]
and $M_c\partial_au_0-M_a\partial_cu_0\in\Sigma_0'$ 
if and only if \eqref{E:matrix} is singular. 
In other words, $M_c\partial_au_0-M_a\partial_cu_0$ belongs to the $T$-periodic kernel of $\Pi\delta^2E_0(u_0)$ 
(and linearly independent of $u_{0x}$) only if \eqref{E:matrix} is singular.
%Various forms of the above ``index formula" are known in the nonlinear waves community; see \cite{KP,CPV,GSS,BHV} among others.
%Such an index formula will again be useful in Section \ref{S:instability} below where we study the spectral \emph{instability} of periodic traveling wave solutions of \eqref{E:KdV}.  
%In the above context, the comparison between the kernels of $\delta^2E_0(u_0)$ and $\Pi\delta^2E_0(u_0)$ can be seen directly as follows.  
\end{remark*}
%%%%%%%%%%%%%%%%%%%%%%%%%%%%%%%%%%%%%%%%%%%%%%%%
%%%%%%%%%%%%%%%%%%%%%%%%%%%%%%%%%%%%%%%%%%%%%%%%

To proceed, we introduce the semi-distance $\rho:X\to\mathbb{R}$, defined by 
\[
\rho(u,v)=\inf_{x_0\in\mathbb{R}}\|u-v(\cdot-x_0)\|_{X}
\] 
and rewrite \eqref{E:stability} as $\sup_{t>0}\rho(u(\cdot, t),u_0)\leq C\|u(\cdot, 0)-u_0\|_X$.
Below we establish the coercivity of $E_0$ on $\Sigma_0$, 
provided that \eqref{E:matrix} is not singular.

%%%%%%%%%%%%%%%%%%%%%%%%%%%%%%%%%%%%%%%%%%%%%%%%%
%%%%%%%%%%%%%%%%%%%%%%%%%%%%%%%%%%%%%%%%%%%%%%%%%
\begin{proposition}[Coercivity]\label{P:coercive}
Under the hypothesis of Theorem \ref{T:stability} 
there exist $\varepsilon>0$ and $C=C(\varepsilon)>0$ 
such that if $u\in \Sigma_0$ with $\rho(u,u_0)<\varepsilon$ then 
\begin{equation}\label{E:coercive}
E_0(u)-E_0(u_0)\geq C\rho(u,u_0)^2.
\end{equation}
\end{proposition}
%%%%%%%%%%%%%%%%%%%%%%%%%%%%%%%%%%%%%%%%%%%%%%%%%
%%%%%%%%%%%%%%%%%%%%%%%%%%%%%%%%%%%%%%%%%%%%%%%%%

\begin{proof}
The proof closely resembles that of \cite[Theorem 3.4]{GSS} or \cite[Proposition~4.3]{J1}.
Here we include the detail for completeness. 

Throughout the proof and the following, $C$ means a positive generic constant; 
$C$ which appears in different places in the text needs not be the same. 

\

Thanks to the implicit function theorem (see \cite[Lemma 4.1]{BSS}, for instance), 
for $\varepsilon>0$ sufficiently small and for an $\varepsilon$-neighborhood 
$\mathcal{U}_\varepsilon:=\{u\in X:\rho(u,u_0)<\varepsilon\}$ of $\mathcal{O}_{u_0}$
we find a unique $C^1$ map $\omega:\mathcal{U}_\varepsilon\to\mathbb{R}$ such that 
\[
\omega(u_0)=0\quad\text{and}\quad \langle u(\cdot+\omega(u)),u_{0x}\rangle=0
\] 
for all $u\in \mathcal{U}_\varepsilon$. Since $E_0$ is invariant under spatial translations, 
it suffices to show \eqref{E:coercive} along $u(\cdot+\omega(u))$. 
Since $u_0$ locally minimizes $H$, and hence $E_0$, constrained to that $P=P_0$ and $M=M_0$, 
necessarily, 
\begin{equation}\label{E:posE}
%\delta^2E_0(u_0)|_{\{\delta P(u_0),\delta M(u_0)\}^\perp}\geq 0,
\delta^2E_0(u_0)|_{\Sigma_0'}\geq 0,
\end{equation}
where $\Sigma_0'$ is defined in \eqref{E:tan}.
We fix $u\in\mathcal{U}_\varepsilon\cap\Sigma_0$ and write
\begin{equation}\label{E:v}
u(\cdot+\omega(u))=u_0+C_1\delta P(u_0)+\Big(C_2-C_1
\frac{\langle\delta M(u_0),\delta P(u_0)\rangle}{\langle\delta M(u_0),\delta M(u_0)\rangle}\Big) 
\delta M(u_0)+y,
\end{equation}
where $C_1, C_2 \in \mathbb{R}$ and 
%$y\in\left(\text{span}\{u_0\}\cup\mathcal{T}_0\right)\cap\{u_{0x}\}^\perp$. 
%$y\in\{\delta P(u_0),\delta M(u_0),u_{0x}\}^\perp$.
$y\in\Sigma_0'\cap\left\{u_{0x}\right\}^\perp$.
Note that $C_1=C_2=y=0$ at $u=u_0$.  

Let $\phi=u(\cdot+\omega(u))-u_0$. We may assume that $\|\phi\|_X<\varepsilon$ 
possibly after replacing $u_0$ by $u_0(\cdot-x_0)$ for some $x_0\in\mathbb{R}$. 
Since $P$ and $M$ remain invariant under spatial translations, Taylor's theorem manifests that
\begin{equation}\label{E:taylor}
\begin{aligned}
P(u)&=P(u(\cdot+\omega(u)))=P(u_0)+\langle\delta P(u_0),\phi\rangle+O(\|\phi\|_{X}^2), \\
M(u)&=M(u(\cdot+\omega(u)))=M(u_0)+\langle\delta M(u_0),\phi\rangle+O(\|\phi\|_{X}^2).
\end{aligned}
\end{equation}
Since $\langle\delta M(u_0),\phi\rangle=C_2\langle\delta M(u_0),\delta M(u_0)\rangle=C_2T$ 
by \eqref{E:v} and \eqref{E:PM}, we infer from the latter equation in \eqref{E:taylor} that 
$C_2=O(\|\phi\|_{X}^2)$. Similarly, since
\begin{align*}
\langle\delta P(u_0),\phi\rangle=&C_1\Big(\langle\delta P(u_0),\delta P(u_0)\rangle
-\frac{\langle\delta M(u_0),\delta P(u_0)\rangle^2}{\langle\delta M(u_0),\delta M(u_0)\rangle}\Big)
+C_2\langle\delta P(u_0),\delta M(u_0)\rangle \\
=&C_1\Big(\|u_0\|_{L^2_{per}(0,T])}^2-\frac{M_0^2}{T}\Big)-C_2M_0,
\end{align*}
the Cauchy-Schwarz inequality, the former equation in \eqref{E:taylor} and \eqref{E:PM} 
lead to that $C_1=O(\|\phi\|_{X}^2)$.

Since $E_0$ remains invariant under spatial translations, furthermore, 
Taylor's theorem manifests that  
\[
E_0(u)=E_0(u(\cdot+\omega(u)))=E_0(u_0)
+\frac{1}{2}\langle\delta^2 E_0(u_0)\phi,\phi\rangle+o(\|\phi\|_{X}^2).
\]
We then use \eqref{E:v} and $C_1, C_2=O(\|\phi\|_X^2)$ to find that 
\[ 
E_0(u)-E_0(u_0)=\frac{1}{2}\langle\delta^2 E_0(u_0)\phi,\phi\rangle+o(\|\phi\|_X^2)
=\frac{1}{2}\langle\delta^2 E_0(u_0)y,y\rangle+O(\|\phi\|_X^2).
\]
Since $y\in\Sigma_0'\cap\left\{u_{0x}\right\}^\perp$, it follows by Lemma~\ref{L:spec} that 
%Since $\ker(\delta^2E_0(u_0))=\text{span}\{u_{0x}\}$ by Proposition~\ref{P:nondegeneracy}, 
%moreover, noting \eqref{E:posE}, it~follows from a spectral decomposition argument that
%\[
%\inf\{\langle\delta^2E_0(u_0)v,v\rangle:\|v\|_X=1,v\in\{\delta P(u_0),\delta M(u_0),u_{0x}\}^\perp\}>0
%\]
%so that $
\[
\langle\delta^2 E_0(u_0)y,y\rangle\geq C\|y\|_X^2.
\]
Finally a straightforward calculation reveals that
\begin{align*}
\|y\|_{X}&\geq\Big|\|\phi\|_{X}-\Big\|C_1\delta P(u_0)+
    \Big(C_2-C_1\frac{\langle\delta M(u_0),\delta P(u_0)\rangle}{\langle\delta M(u_0),\delta M(u_0)\rangle}\Big) \delta M(u_0)\Big\|_{X}\Big|\\
&\geq\|\phi\|_{X}-C\|\phi\|_{X}^2,
\end{align*}
whence
\[
E_0(u)-E_0(u_0)\geq C\|\phi\|_{X}^2=C\|u(\cdot+\omega(u))-u_0\|_{X}^2\geq C\rho(u,u_0)^2.
\]
\end{proof}

%%%%%%%%%%%%%%%%%%%%%%%%%%%%%%%%%%%%%%%%%%%%%%%%%
%%%%%%%%%%%%%%%%%%%%%%%%%%%%%%%%%%%%%%%%%%%%%%%%%
\begin{proof}[Proof of Theorem \ref{T:stability}]
The proof resembles that of \cite[Lemma 4.1]{J1} in the case of generalized KdV equations.   

\

Let $\varepsilon_0>0$ be such that Proposition \ref{P:coercive} holds and 
let $\phi \in X$ satisfy $\rho(u_0+\phi,u_0)\leq\varepsilon$ 
for some $0<\varepsilon<\varepsilon_0$ sufficiently small. 
By replacing $\phi$ by $\phi(\cdot-x_0)$ for some $x_0\in\mathbb{R}$, if necessary, 
we may assume without loss of generality that  $\|\phi\|_{X}\leq\varepsilon$. 
Since $u_0$ is a critical point of $E_0$, then, Taylor's theorem implies that 
$E_0(u_0+\phi)-E_0(u_0)\leq C\varepsilon^2$.
Furthermore, notice that if $u_0+\phi\in\Sigma_0$ 
then the unique solution $u(\cdot,t)$ of \eqref{E:KdV} with the initial condition $u(\cdot,0)=u_0+\phi$ 
must lie in $\Sigma_0$ as long as the solution exists. 
Since $E_0(u(\cdot, t))=E_0(u(\cdot,0))=E_0(u_0+\phi)$ independently of $t$, on the other hand, 
Proposition \ref{P:coercive} implies that $\rho(u(\cdot,t),u_0)^2\leq C\varepsilon^2$ for all $t\geq 0$. 
%This proves the first part of Theorem \ref{T:stability}.

\

In case $u_0+\phi$ is not required to be in $\Sigma_0$, 
%but the matrix \eqref{E:matrix} is not singular at $u_0(c_0,a_0)$, 
we utilize the nondegeneracy of the constraint set, i.e., the mapping
\[
(c,a)\mapsto (P(u(\cdot;c,a)), M(u(\cdot;c,a)))
\]
is a period-preserving diffeomorphism from a neighborhood of $(c_0,a_0)$ 
onto a neighborhood of $(P_0, M_0)$. 
We may therefore find $c, a \in\mathbb{R}$ such that $|c|+|a|=O(\varepsilon)$ and 
$u_{\varepsilon}(\cdot\,;c_0+c,a_0+a)$ is a $T$-periodic traveling wave of \eqref{E:KdV} satisfying that
\begin{align*}
P(u_{\varepsilon}(\cdot;c_0+c,a_0+a))=P(u_0+\phi)\quad\text{and}\quad
M(u_{\varepsilon}(\cdot;c_0+c,a_0+a))=M(u_0+\phi).
\end{align*}
Let
\[
{E}_{\varepsilon}(u)=E_0(u)+cP(u)+aM(u).
\]
We may furthermore assume that $u_\varepsilon$ minimizes $E_\varepsilon$ 
subject to that $P$ and $M$ are conserved. 
We then rerun the argument in the proof of Proposition \ref{P:coercive} and show that 
\[
E_{\varepsilon}(u)-E_{\varepsilon}(u_{\varepsilon})\geq C\rho(u,u_{\varepsilon})^2
\]
so long as $\rho(u,u_\varepsilon)$ is sufficiently small.  Since $u_\varepsilon$ is a critical point of 
$E_\varepsilon$, moreover,
$E_\varepsilon(u(\cdot,t))-E_\varepsilon(u_\varepsilon)=
E_\varepsilon(u_0+\phi)-E_\varepsilon(u_\varepsilon) \leq C\varepsilon^2$ 
for all $t\geq 0$. Finally the triangle inequality implies that 
\begin{align*} 
\rho(u(\cdot, t), u_0)^2\leq &C\left(\rho(u(\cdot,t), u_\varepsilon)^2+\rho(u_\varepsilon, u_0)^2\right) \\
\leq &C(E_\varepsilon(u(\cdot,t))-E_\varepsilon(u_\varepsilon))+\|u_\varepsilon-u_0\|_X
\leq C\varepsilon^2
\end{align*}
for all $t\geq 0$. In other words, $u_0(\cdot\,; c_0,a_0)$ is orbitally stable to small perturbations that 
``slightly" change $P$ and $M$.
\end{proof}

One may rerun the arguments in the proof of Theorem \ref{T:stability} mutatis mutandis 
to establish the orbital stability of a periodic, local constrained minimizer for \eqref{E:KdV'}  
in the range $0<\alpha\leq 2$ and $0<p<p_{max}$, where $p_{max}$ is in \eqref{D:pmax},
provided that the matrix \eqref{E:matrix} is not singular at the underlying wave.
%CHANGED-MJ_FINAL added "nondegenerate" since we need this and can't show it in general...
%ENDCHANGED
%%% (Vera) changed non degenerate -> Jordan block structure.

%%%%%%%%%%%%%%%%%%%%%%%%%%%%%%%%%%%%%%%%%%%%%%%%%
%%%%%%%%%%%%%%%%%%%%%%%%%%%%%%%%%%%%%%%%%%%%%%%%%
%%%%%%%%%%%%%%%%%%%%%%%%%%%%%%%%%%%%%%%%%%%%%%%%%
\section{Adaptation to equations of regularized long wave type}\label{S:BBM}
%%%%%%%%%%%%%%%%%%%%%%%%%%%%%%%%%%%%%%%%%%%%%%%%%
%%%%%%%%%%%%%%%%%%%%%%%%%%%%%%%%%%%%%%%%%%%%%%%%%
%%%%%%%%%%%%%%%%%%%%%%%%%%%%%%%%%%%%%%%%%%%%%%%%%
The results in Section~\ref{S:existence} through Section~\ref{S:stability} 
are readily adapted to other, nonlinear dispersive equations. 
We shall illustrate this by discussing equations of regularized long wave type
\begin{equation}\label{E:BBM}
u_t-u_x+\Lambda^\alpha u_t+(u^{2})_x=0,
\end{equation}
where $0<\alpha\leq2$.  

In the case of $\alpha=2$, notably, \eqref{E:BBM} recovers the Benjamin-Bona-Mahony (BBM) equation, 
which was advocated in \cite{BBM} as an alternative to the KdV equation.
In fact solutions of the initial value problem associated with the BBM equation 
were argued to enjoy a better smoothness property than those with the KdV equation, 
whereby it was named the regularized long wave equation. For other values of $\alpha$, 
similarly, \eqref{E:BBM} ``regularizes" its KdV counterpart in \eqref{E:KdV}. 
In a small amplitudes and long wavelengths regime, where $u_x+u_t=o(1)$, 
furthermore, \eqref{E:BBM} is formally equivalent to \eqref{E:KdV}. 

The present treatment extends mutatis mutandis to general power-law nonlinearities; 
see Remark \ref{R:gKdV}. We choose to work with the quadratic nonlinearity, 
however, to simplify the exposition.  

\

In the range $\alpha\geq 0$, one may work out the local in time well-posedness for \eqref{E:BBM} 
in $H^\beta_{per}([0,T])$, $\beta> \max(0, (3-\alpha)/2)$, via the energy method,  
corroborating that \eqref{E:BBM} regularizes \eqref{E:KdV}; see Remark~\ref{R:LWP}. 
The proof is rudimentary. Hence we omit the detail. 
With the help of the smoothing effects of $(1-\partial_x^2)^{-1}$, 
the global in time well-posedness for \eqref{E:BBM} may be established in $H^{0+}(\mathbb{R})$
in the case of $\alpha=2$, namely the BBM equation; see \cite{BT}, for instance. 
In the periodic functions setting, however, the existence matter for \eqref{E:BBM} 
seems not adequately understood in spaces of low regularities. 
%For the present purpose, on the other hand, a short-time well-posedness result suffices. 

Throughout the section 
we shall work in a suitable subspace (abusing notation) $X$ of $H^{\alpha/2}_{per}([0,T])$, 
where the initial value problem associated with \eqref{E:BBM} is well-posed for some interval of time;
$T>0$, the period, is fixed. 

\

Notice that \eqref{E:BBM} possesses three conserved quantities (abusing notation)
\begin{align}
H(u)=&\int_{0}^T\Big(\frac12 u^2-\frac{1}{3}u^{3}\Big)~dx\label{E:HBBM} \\
\intertext{and}
P(u)=&\int_{0}^T\frac{1}{2}(u^2+|\Lambda^{\alpha/2}u|^2)~dx,\label{E:PBBM} \\ 
M(u)=&\int_{0}^T u~dx, \label{E:MBBM}
\end{align}
which correspond to the Hamiltonian and the momentum, the mass, respectively. 
Throughout the section we shall use $H$ and $P, M$ 
for those in \eqref{E:HBBM} and \eqref{E:PBBM}, \eqref{E:MBBM}. 
Notice that $H,P,M:X\to\mathbb{R}$ are smooth. %and invariant under spatial translations.   
Notice moreover that \eqref{E:BBM} may be written in the Hamiltonian form 
\[
u_t=J\delta H(u),
\] 
where $J=(1+\Lambda^\alpha)^{-1}\partial_x$. 
%Recall that $\delta$ denotes variational differentiation. 

\

We seek a periodic traveling wave $u(x,t)=u(x-ct-x_0)$ of \eqref{E:BBM}, 
where $c\in \mathbb{R}$, $x_0 \in \mathbb{R}$ and $u$ is $T$-periodic, satisfying by quadrature that
\begin{equation}\label{E:pBBM}
c(1+\Lambda^\alpha)u+u-u^{2}+a=0
\end{equation}
for some $a\in\mathbb{R}$, or equivalently (abusing notation) 
\begin{equation}\label{E:pBBM'}
\delta E(u;c,a):=\delta(H(u)+cP(u)+aM(u))=0.
\end{equation}
We'll write a periodic traveling wave of \eqref{E:BBM} as $u=u(\cdot\,; c,a)$ with the understanding 
that $T>0$ is arbitrary but fixed and that we may mod out $x_0 \in \mathbb{R}$. 

\

Below we record the existence, symmetry, and regularity properties 
for a family of periodic traveling waves of \eqref{E:BBM}, which arise as local energy minimizers 
subject to conservations of the momentum and the mass. 

%%%%%%%%%%%%%%%%%%%%%%%%%%%%%%%%%%%%%%%%%%%%%%%%%
%%%%%%%%%%%%%%%%%%%%%%%%%%%%%%%%%%%%%%%%%%%%%%%%%
\begin{lemma}[Existence, symmetry and regularity]\label{P:BBMexistence}
Let $1/3<\alpha\leq 2$. A local minimizer $u$ for $H$, defined in \eqref{E:HBBM}, subject to that 
$P$ and $M$, defined in \eqref{E:PBBM} and \eqref{E:MBBM}, respectively, are conserved 
exists in $H^{\alpha/2}_{per}([0,T])$ for each $0<T<\infty$ 
and it satisfies \eqref{E:pBBM} for some $c\neq 0$ and $a\in\mathbb{R}$. 
It depends upon $c$ and $a$ in the $C^1$ manner. 
Moreover $u=u(\cdot\,; c,a)$ may be chosen to be even and strictly decreasing 
over the interval $[0, T/2]$, and $u\in H^\infty_{per}([0,T])$. 
\end{lemma}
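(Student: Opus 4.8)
The plan is to follow the proof of Proposition~\ref{P:existence}, but with the roles of the Hamiltonian and the momentum interchanged throughout, because for \eqref{E:BBM} the dispersive term $\frac12\int_0^T|\Lambda^{\alpha/2}u|^2\,dx$ sits in the momentum \eqref{E:PBBM} rather than in the Hamiltonian \eqref{E:HBBM}. Concretely, $P(u)=\frac12\|u\|_{H^{\alpha/2}_{per}([0,T])}^2$ is coercive and weakly lower semicontinuous, whereas $H(u)=\frac12\int_0^T u^2\,dx-\frac13\int_0^T u^3\,dx$ is a lower-order functional that is weakly \emph{continuous} along $H^{\alpha/2}$-bounded sequences, since $H^{\alpha/2}_{per}([0,T])$ embeds compactly in $L^2_{per}([0,T])$ and, for $\alpha>1/3$, in $L^3_{per}([0,T])$. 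For this reason I would \emph{not} minimize $H$ directly on a level set of $P$: that set is not weakly closed, and along a minimizing sequence the dispersive part of $P$ can leak in the weak limit, so the constraint $P=P_0$ need not survive. Instead I would solve the reciprocal problem of minimizing $P$ subject to $H$ and $M$ held fixed. Here $H$ and $M$ are weakly continuous, so their level sets are weakly closed, while $P$ is coercive and weakly lower semicontinuous, and the direct method then produces a minimizer with no loss of compactness.

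First I would run this minimization over admissible constraint values chosen so that $u\equiv0$ is excluded, obtain a minimizer $u\in H^{\alpha/2}_{per}([0,T])$, and read off its Euler--Lagrange equation $(1+\Lambda^\alpha)u=\eta(u-u^2)+\zeta$ with Lagrange multipliers $\eta,\zeta$. Provided $\eta\neq0$ --- which holds whenever $u$ is nonconstant, the alternative $\eta=0$ forcing $(1+\Lambda^\alpha)u$ constant and hence $u$ constant --- this is exactly \eqref{E:pBBM} after dividing through, with $c=-1/\eta\neq0$ and $a=\zeta/\eta$; verifying that the minimizer is genuinely nonconstant, as for KdV, secures $c\neq0$. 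The symmetry and monotonicity then follow from the same symmetric-decreasing-rearrangement argument as in Proposition~\ref{P:existence}, since the functionals $\int_0^T|\Lambda^{\alpha/2}u|^2\,dx$, $\int_0^T u^2\,dx$ and $\int_0^T u^3\,dx$ are unchanged; and the regularity $u\in H^\infty_{per}([0,T])$ is inherited verbatim, because solving \eqref{E:pBBM} gives $u=(1+\Lambda^\alpha)^{-1}\frac1c(u^2-u-a)$, whose bootstrap rests on precisely the resolvent $(1+\Lambda^\alpha)^{-1}$ already used in \eqref{E:integralE}.

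I expect the main obstacle to be twofold, and to lie entirely beyond the verbatim parts. Unlike \eqref{E:KdV}, the equation \eqref{E:BBM} enjoys neither a Galilean shift nor the scaling \eqref{E:scaling}: the profile equation \eqref{E:pBBM} rigidly ties the coefficient of $\Lambda^\alpha u$ to that of $u$, and one checks that a constant shift $u\mapsto u+\beta$ leaves the family \eqref{E:pBBM} invariant only when $\beta=0$. Hence the reductions ``it suffices to take $a=0$ and $c=1$'' used for KdV are unavailable, the genuine two-parameter family $u(\cdot\,;c,a)$ cannot be collapsed to a single normalized equation, and the whole argument must be run directly in the $(c,a)$ variables. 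Second, and more seriously, I must upgrade the reciprocal minimizer to a \emph{local minimizer of $H$ subject to $P$ and $M$ conserved}, the form demanded by the stability theory of Section~\ref{S:stability}; a minimizer of $P$ constrained by $H$ is only a priori a constrained critical point of $H$, and promoting it to a constrained minimizer requires the Nehari/rescaling comparison of \eqref{E:K+P}--\eqref{E:U}, here built on the scaling identity $\langle\delta E(u;c,a),u\rangle=0$ in place of its KdV analogue. Once this identification is secured, the $C^1$ dependence of $u$ on $(c,a)$ follows from the implicit function theorem exactly as in Proposition~\ref{P:existence}.
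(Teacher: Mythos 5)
Your proposal and the paper part ways at your central structural claim, and that claim is wrong in the half that matters. The paper proves this lemma in one paragraph, by a \emph{scaling} reduction: although the evolution \eqref{E:BBM} has no scaling or Galilean invariance, the profile equation \eqref{E:pBBM} does admit an amplitude--dilation renormalization of the speed. If $u$ solves \eqref{E:pBBM} with parameters $(c,a)$, then $w(x)=\frac{2}{c+1}\,u\big(\big(\frac{2c}{c+1}\big)^{1/\alpha}x\big)$ solves \eqref{E:pBBM} with $c=1$ and with $a$ replaced by $4a/(c+1)^2$ (the paper displays slightly different constants, but this is its argument). Once $c=1$, equation \eqref{E:pBBM} \emph{is} \eqref{E:pKdV} with $c=2$; equivalently, the functional $H+P+aM$ built from \eqref{E:HBBM}--\eqref{E:MBBM} coincides with the KdV functional $E(\cdot\,;2,a)$ of \eqref{D:E}. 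Proposition \ref{P:existence} then applies verbatim --- including its Galilean removal of $a$, which is legitimate once one is in the KdV frame, where constant shifts do preserve the structure. You are right that a shift $u\mapsto u+\beta$ destroys the BBM form, but wrong that ``the scaling'' fails as well and that the family ``cannot be collapsed to a single normalized equation''; that error is what forces you onto a much harder road.

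On that road there is a genuine gap, exactly at the step you flag as ``more serious.'' Your reciprocal minimization (minimize the coercive $P$ with $H$, $M$ fixed) is sound and in the spirit of \eqref{E:K+P}; it yields a solution of \eqref{E:pBBM} with the stated symmetry and regularity. But the lemma, and everything downstream of it (Lemma \ref{P:nondegeneracy2}, Theorem \ref{T:BBMmain}), needs a local \emph{minimizer of $H$ subject to $P$ and $M$ conserved}, i.e. $\delta^2E(u;c,a)\geq0$ on $\{\delta P(u),\delta M(u)\}^\perp$. Swapping objective and constraint does not give this for free. What your minimization yields is $\delta^2(P-\eta H-\zeta M)(u)\geq 0$ on $\{\delta H(u),\delta M(u)\}^\perp$; since $\eta=-1/c$ and, by \eqref{E:pBBM}, $\delta H(u)=-c\,\delta P(u)-a\,\delta M(u)$ (so the two tangent spaces coincide), this reads $\frac1c\,\delta^2E(u;c,a)\geq0$ on $\{\delta P(u),\delta M(u)\}^\perp$. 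If the multiplier has the wrong sign, $c<0$, your wave is a constrained \emph{maximizer} of $H$ --- compare minimizing $x^2+y^2$ on $\{xy=1\}$, which produces a local maximum of $xy$ on the circle $x^2+y^2=2$ --- and nothing in your construction controls this sign. Nor can your proposed repair work as sketched: the Nehari/rescaling comparison \eqref{E:K+P}--\eqref{E:U} hinges on $E$ consisting of homogeneous pieces of degrees $2$ and $3$ only, which for KdV was arranged precisely by the reduction to $a=0$; since, as you yourself observe, $a$ cannot be removed within your BBM framework, $E(\cdot\,;c,a)$ retains the degree-one piece $aM$, the identities \eqref{E:ident} fail, and choosing $b$ with $U(b\phi)=U(u)$ no longer closes the comparison. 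The paper's reduction to the KdV problem dissolves both difficulties at once.
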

%%%%%%%%%%%%%%%%%%%%%%%%%%%%%%%%%%%%%%%%%%%%%%%%%
%%%%%%%%%%%%%%%%%%%%%%%%%%%%%%%%%%%%%%%%%%%%%%%%%

\begin{proof} 
If $u(x; c, a)$ is $T$-periodic and satisfies \eqref{E:pBBM} 
for some $c\neq 0$ and $a\in \mathbb{R}$ then, by a scaling argument,  
\[
(c+1)u\Big(\Big(\frac{c+1}{c}\Big)^{1/\alpha}x; 1, c^{2}a\Big)
\]
is $\Big(\frac{c+1}{c}\Big)^{1/\alpha}T$-periodic and satisfies
\[
(1+\Lambda^\alpha)u+u-u^2+c^2a=0.
\] 
Therefore it suffices to take $c=1$ in \eqref{E:pBBM}, which brings us to \eqref{E:pKdV}, where $c=2$.  
The proof is then identical to that of Proposition \ref{P:existence}. We omit the detail.
\end{proof}

We promptly address the non\-de\-gen\-er\-a\-cy of the linearization 
associated with \eqref{E:pBBM'} at a periodic, local constrained minimizer for \eqref{E:pBBM}. 

%%%%%%%%%%%%%%%%%%%%%%%%%%%%%%%%%%%%%%%%%%%%%%%%%
%%%%%%%%%%%%%%%%%%%%%%%%%%%%%%%%%%%%%%%%%%%%%%%%%
\begin{lemma}[Nondegeneracy]\label{P:nondegeneracy2}
Let $1/3<\alpha\leq 2$. 
If $u(\cdot\,; c,a) \in H^{\alpha/2}_{per}([0,T])$ for some $c \neq 0$, $a \in \mathbb{R}$ 
and for some $T>0$ locally minimizes $H$, defined in \eqref{E:HBBM}, 
subject to that $P$ and $M$, defined in \eqref{E:PBBM} and \eqref{E:MBBM}, respectively, 
are conserved then the associated linearized operator
\begin{equation}\label{E:L+BBM}
\delta^2E(u;c,a)=c\left(1+\Lambda^\alpha\right)+1-2u
\end{equation}
acting on $L^2_{per}([0,T])$ satisfies that
\[ 
\ker(\delta^2E(u;c,a))={\rm span}\{u_x\}.
\] 
\end{lemma}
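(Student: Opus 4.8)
The plan is to reduce the nondegeneracy of the BBM linearization to the already-established nondegeneracy for the KdV-type equation in Proposition~\ref{P:nondegeneracy}. By the scaling argument in the proof of Lemma~\ref{P:BBMexistence}, it suffices to treat the case $c=1$, where the traveling wave profile solves $(1+\Lambda^\alpha)u + u - u^2 = 0$, i.e.\ $\Lambda^\alpha u - u^2 + 2u = 0$, which is exactly \eqref{E:pKdV} with speed $c=2$ and $a=0$. Correspondingly, the linearized operator \eqref{E:L+BBM} with $c=1$ becomes
\[
\delta^2E(u;1,a) = (1+\Lambda^\alpha) + 1 - 2u = \Lambda^\alpha - 2u + 2,
\]
which coincides with the KdV-type linearized operator \eqref{E:L+KdV} evaluated at the corresponding profile. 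Since Proposition~\ref{P:existence} guarantees that this profile is itself a local constrained minimizer for the KdV problem (the minimization problems are identified under the scaling), Proposition~\ref{P:nondegeneracy} applies verbatim and yields $\ker(\delta^2E(u;1,a)) = \mathrm{span}\{u_x\}$.

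First I would make the scaling change of variables explicit and verify that it carries a BBM local constrained minimizer to a KdV local constrained minimizer, so that the hypotheses of Proposition~\ref{P:nondegeneracy} are genuinely met and not merely the profile equation. This is the same identification already used in the proof of Lemma~\ref{P:BBMexistence}, so I would simply invoke it. Second, I would check that the kernel is invariant under the scaling: if $v(x) = \beta u(\sigma x)$ for constants $\beta,\sigma>0$, then $v_x$ and $u_x$ correspond under the same change of variables, so the one-dimensional kernel spanned by the translation mode is preserved. Thus nondegeneracy for the rescaled (KdV) operator transfers back to nondegeneracy for the original BBM operator with general $c\neq 0$.

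Alternatively, and perhaps more transparently, I would run the proof of Proposition~\ref{P:nondegeneracy} directly for \eqref{E:L+BBM}, since all the structural inputs are available without rescaling. The analogue of Lemma~\ref{L:L+} holds: differentiating \eqref{E:pBBM} in $x$ gives $u_x \in \ker(\delta^2E)$; the local minimizer property gives $n_-(\delta^2E) \leq 2$ via Courant's principle on the codimension-two constraint; and differentiating \eqref{E:pBBM'} in $c$ and $a$ shows that $\delta P(u) = (1+\Lambda^\alpha)u$ and $\delta M(u) = 1$ lie in $\mathrm{range}(\delta^2E)$, with the profile equation supplying a further relation so that $1$, $u$, and $u^2$ are again controlled. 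The oscillation estimate of Lemma~\ref{L:nodal}, which rests only on the Dirichlet-to-Neumann extension for $\Lambda^\alpha$ and the potential $-2u+\text{const}$, is unchanged. Then the even/odd decomposition and the sign-counting contradiction argument go through word for word.

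The main obstacle will be confirming that the right objects lie in $\mathrm{range}(\delta^2E(u))$ to reproduce the orthogonality obstruction at the heart of the contradiction in Proposition~\ref{P:nondegeneracy}. In the KdV case the key triple was $\{1,u,u^2\}$; for BBM the natural objects arising from differentiating the profile relation involve $(1+\Lambda^\alpha)u$ rather than $u$ alone. I would verify that, after using \eqref{E:pBBM} to eliminate $\Lambda^\alpha u$ in favor of $u^2-u-a$, one still recovers that $\mathrm{span}\{1,u,u^2\}$ is contained in $\mathrm{range}(\delta^2E(u))$, so the test functions $u-u(T_1)$ and $(u-u(T_1))(u-u(T_2))$ built from the monotone profile produce the same sign contradiction. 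Given the identification via scaling, however, the cleanest route is simply to invoke Proposition~\ref{P:nondegeneracy} after reduction to $c=1$, and I would present the proof in that form, remarking that the direct argument is otherwise identical.
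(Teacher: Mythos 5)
Your ``alternative'' route (third paragraph) is essentially the paper's proof. The paper checks that Lemma~\ref{L:nodal} and (L1), (L2) of Lemma~\ref{L:L+} hold verbatim for the operator \eqref{E:L+BBM}, and the only genuinely new point is (L3): differentiating \eqref{E:pBBM'} in $a$ gives $1\in\mathrm{range}(\delta^2E(u))$; differentiating in $c$ gives only $(1+\Lambda^\alpha)u\in\mathrm{range}(\delta^2E(u))$, which is useless for the sign-counting argument since it need not be monotone; but the profile equation $c(1+\Lambda^\alpha)u=u^2-u-a$ converts it to $u^2-u\in\mathrm{range}(\delta^2E(u))$, and $\delta^2E(u)u=-u^2-a$ then yields $u^2$, hence $u$, in the range. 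After that the even/odd decomposition and the contradiction argument run word for word. You identified exactly this obstacle and its resolution, so had you presented that as the proof, it would be correct and identical to the paper's.

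However, the route you say you would actually present --- rescale to $c=1$ and invoke Proposition~\ref{P:nondegeneracy} as a black box --- has a genuine gap. The hypothesis of Proposition~\ref{P:nondegeneracy} is that $u$ locally minimizes the KdV Hamiltonian \eqref{E:HKdV} subject to conservation of the KdV momentum $\frac12\int u^2$ and the mass, whereas your hypothesis here concerns a different Hamiltonian \eqref{E:HBBM} and a different momentum $\frac12\int(u^2+|\Lambda^{\alpha/2}u|^2)$. It is true that $H_{BBM}+P_{BBM}+aM=H_{KdV}+2P_{KdV}+aM$ as functionals, so the profile equations and the Hessians coincide; but ``local minimizer subject to constraints'' is a statement about the constraint manifold, and the second-order information inherited from the BBM hypothesis is nonnegativity of $\delta^2E(u)$ on $\{(1+\Lambda^\alpha)u,\,1\}^\perp$, which does not imply nonnegativity on $\{u,\,1\}^\perp$, the condition encoded in the KdV hypothesis. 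The identification ``already used in Lemma~\ref{P:BBMexistence}'' is an identification of critical-point equations only, not of constrained-minimizer status, so it cannot simply be invoked. Moreover, the rescaling $u\mapsto\beta u(\sigma\,\cdot)$ does not even preserve the BBM constrained problem: unlike the KdV case, where \eqref{E:scaling} multiplies each of $H$, $P$, $M$ by a pure factor, the two terms of $P_{BBM}$ scale differently in $\sigma$, so constraint sets are not carried to constraint sets and the reduction to $c=1$ is not available at the level of hypotheses. (A smaller slip: the constant $a$ does not drop out when $c=1$; \eqref{E:pBBM} becomes \eqref{E:pKdV} with $c=2$ and the same $a$.) What saves the lemma is precisely that the direct argument never needs the KdV minimization hypothesis: the nodal bounds, the bound $n_-(\delta^2E(u))\leq 2$ (nonnegativity on \emph{some} codimension-two subspace suffices for Courant's principle), and $1,u,u^2\in\mathrm{range}(\delta^2E(u))$ all follow from the BBM hypothesis itself. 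So the direct argument should be the proof, not the remark.
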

%%%%%%%%%%%%%%%%%%%%%%%%%%%%%%%%%%%%%%%%%%%%%%%%%
%%%%%%%%%%%%%%%%%%%%%%%%%%%%%%%%%%%%%%%%%%%%%%%%%

\begin{proof}
Notice that Lemma~\ref{L:nodal} holds for $\delta^2E(u)$ in \eqref{E:L+BBM};
indeed one may modify the arguments in Appendix~\ref{S:appendix} and prove it.
%Notice that the arguments from the Appendix below can be modified to show that Lemma~\ref{L:nodal} holds for $\delta^2E(u)$ in \eqref{E:L+BBM}.
Notice moreover that (L1) and (L2) of Lemma~\ref{L:L+} hold for \eqref{E:L+BBM}; 
see Section~\ref{S:nondegeneracy} for the detail.

We claim that (L3) of Lemma~\ref{L:L+} holds for $\delta^2E(u)$ in \eqref{E:L+BBM}. 
Differentiating \eqref{E:pBBM'} with respect to $c$ and $a$, respectively, we obtain that
\[ 
\delta^2E(u)u_c=-\delta P(u) \quad \text{and}\quad \delta^2E(u)u_a=-\delta M(u).
\]
Since 
\[
\delta P(u)=(1+\Lambda^\alpha)u\quad\textrm{and}\quad\delta M(u)=1
\]
moreover $1,(1+\Lambda^\alpha)u\in\text{range}(\delta^2 E(u))$. 
Unfortunately $(1+\Lambda^\alpha)u$ may not be strictly monotone over $[0,T/2]$.
%Hence one cannot use it in the arguments in the proof of Proposition~\ref{P:nondegeneracy}.
Appealing to \eqref{E:pBBM}, on the other hand, we find that
\[
c(1+\Lambda^\alpha)u=u^2-u-a.
\]
Therefore $u^{2}-u\in\text{range}(\delta^2 E)$. Since
\[ 
\delta^2E(u)u=c(1+\Lambda^\alpha)u+u-2u^{2}=-u^{2}-a,
\]
furthermore, $u, u^2\in\text{range}(\delta^2 E)$. This proves the claim. 
The proof is then identical to that of Proposition~\ref{P:nondegeneracy}. 
We omit the detail. \end{proof}

Repeating the arguments in the proofs of Theorem~\ref{T:stability} and Proposition~\ref{P:coercive},
we ultimately establish the orbital stability of a periodic, local constrained minimizer 
for \eqref{E:pBBM}, provided that the associated linearized operator supports a Jordan block structure. 
We summarize the conclusion. 

%%%%%%%%%%%%%%%%%%%%%%%%%%%%%%%%%%%%%%%%%%%%%%%%%%
%%%%%%%%%%%%%%%%%%%%%%%%%%%%%%%%%%%%%%%%%%%%%%%%%%
%\begin{theorem}[Orbital stability]\label{T:BBMmain}
%Let $1/3<\alpha\leq 2$ and $u_0(\cdot\,;c_0,a_0)\in H^{\alpha/2}_{per}([0,T])$ locally minimize $H$, 
%defined in \eqref{E:HBBM}, subject to that $P$ and $M$, 
%in \eqref{E:PBBM} and \eqref{E:MBBM}, respectively, are conserved 
%for some $c_0\neq 0$, $a_0\in\mathbb{R}$ and for some $T>0$.
%Then for any $\varepsilon>0$ sufficiently small there exists a constant $C=C(\varepsilon)>0$ such that: 
%
%if $\phi\in X$ and $\|\phi\|_X\leq\varepsilon$ and if $u(\cdot,t)$ is a solution of \eqref{E:BBM} for some 
%time interval with the initial condition $u(\cdot,0)=u_0+\phi\in X$ satisfying
%\[
%P(u_0+\phi)=P(u_0)\quad\text{and} \quad M(u_0+\phi)=M(u_0),
%\]
%then $u(\cdot,t)$ may be continued to a solution for all $t>0$ such that
%\[
%\sup_{t>0}\inf_{x_0\in\mathbb{R}}\left\|u(\cdot,t)-u_0(\cdot-x_0)\right\|_X\leq C\|\phi\|_X.
%\]
%If in addition the matrix
%$\left(\begin{matrix}
%         M_a(u(\cdot\,;c,a)) & P_a(u(\cdot\,;c,a))\\
%         M_c(u(\cdot\,;c,a)) & P_c(u(\cdot\,;c,a))
%         \end{matrix}\right)$
%is not singular at $u_0(\cdot\,;c_0,a_0)$ then the same conclusion holds for all $\phi\in X$ 
%such that $\|\phi\|_X\leq \varepsilon$.
%\end{theorem}
%%%%%%%%%%%%%%%%%%%%%%%%%%%%%%%%%%%%%%%%%%%%%%%%%
%%%%%%%%%%%%%%%%%%%%%%%%%%%%%%%%%%%%%%%%%%%%%%%%%

%%%%%%%%%%%%%%%%%%%%%%%%%%%%%%%%%%%%%%%%%%%%%%%%%
%%%%%%%%%%%%%%%%%%%%%%%%%%%%%%%%%%%%%%%%%%%%%%%%%
\begin{theorem}[Orbital stability]\label{T:BBMmain}
Let $1/3<\alpha\leq 2$ and $u_0(\cdot\,;c_0,a_0)\in H^{\alpha/2}_{per}([0,T])$ 
for some $c_0\neq 0$, $a_0\in\mathbb{R}$ and for some $T>0$ locally minimizes $H$, 
defined in \eqref{E:HBBM}, subject to that $P$ and $M$, 
defined in \eqref{E:PBBM} and \eqref{E:MBBM}, respectively, are conserved.
If the matrix
\[
\left(\begin{matrix}
         M_a(u(\cdot\,;c,a)) & P_a(u(\cdot\,;c,a))\\
         M_c(u(\cdot\,;c,a)) & P_c(u(\cdot\,;c,a))
         \end{matrix}\right)
\]
is not singular at $u_0(\cdot\,;c_0,a_0)$ then for any $\varepsilon>0$ sufficiently small 
there exists a constant $C=C(\varepsilon)>0$ such that:

if $\phi\in X$ and $\|\phi\|_X\leq\varepsilon$ and if $u(\cdot,t)$ is a solution of \eqref{E:BBM} 
for some time interval with the initial condition $u(\cdot,0)=u_0+\phi$ 
then $u(\cdot,t)$ may be continued to a solution for all $t>0$ such that
\[
\sup_{t>0}\inf_{x_0\in\mathbb{R}}\left\|u(\cdot,t)-u_0(\cdot-x_0)\right\|_X\leq C\|\phi\|_X.
\]
\end{theorem}
%%%%%%%%%%%%%%%%%%%%%%%%%%%%%%%%%%%%%%%%%%%%%%%%
%%%%%%%%%%%%%%%%%%%%%%%%%%%%%%%%%%%%%%%%%%%%%%%%

Related stability results in the case of $\alpha=1,2$ are found, respectively, in \cite{ASB} and \cite{J3},
among others.

%%%%%%%%%%%%%%%%%%%%%%%%%%%%%%%%%%%%%%%%%%%%%%%%%
%%%%%%%%%%%%%%%%%%%%%%%%%%%%%%%%%%%%%%%%%%%%%%%%%
%%%%%%%%%%%%%%%%%%%%%%%%%%%%%%%%%%%%%%%%%%%%%%%%%
\section{Remark on linear instability}\label{S:instability}
%%%%%%%%%%%%%%%%%%%%%%%%%%%%%%%%%%%%%%%%%%%%%%%%%
%%%%%%%%%%%%%%%%%%%%%%%%%%%%%%%%%%%%%%%%%%%%%%%%%
%%%%%%%%%%%%%%%%%%%%%%%%%%%%%%%%%%%%%%%%%%%%%%%%%
We shall complement the nonlinear stability result in Section~\ref{S:stability}
by discussing the linear instability of periodic traveling waves for the KdV type equation
\begin{equation}\label{E:KdV6}
u_t-\mathcal{M}u_x+f(u)_x=0. 
\end{equation}
Here $\mathcal{M}$ is a Fourier multiplier defined as 
$\widehat{\mathcal{M}u}(\xi)=m(\xi)\widehat{u}(\xi)$, satisfying that
\begin{equation}\label{A:M}
C_1|\xi|^\alpha \leq m(\xi)\leq C_2|\xi|^\alpha, \qquad \text{$|\xi|\gg 1$}
\end{equation}
for some $\alpha\geq 1$ and for some $C_1, C_2>0$, 
while $f:\mathbb{R}\to\mathbb{R}$ is $C^1$, satisfying that
\begin{equation}\label{A:f}
f(0)=f'(0)=0\quad\textrm{and}\quad\lim_{u\to \infty}\frac{f(u)}{u}=\infty.
\end{equation}
Clearly \eqref{E:KdV} fits into the framework. 
We assume that \eqref{E:KdV6} possesses two conserved quantities 
\begin{align*}
P(u)=&\int^T_0 \frac12 u^2~dx
\intertext{and} 
M(u)=&\int^T_0 u~dx, 
\end{align*}
interpreted as the momentum and the mass, respectively.

\

We assume that \eqref{E:KdV6} supports a smooth, four-parameter family of periodic traveling waves, 
denoted $u=u(\cdot-x_0; c, a, T)$, where $c$ and $a$ form an open set in $\mathbb{R}^2$, 
$x_0\in\mathbb{R}$ is arbitrary (and hence we may mod it out), $T_0<T<\infty$ for some $T_0>0$, 
and $u$ is $T$-periodic, satisfying by quadrature that
\begin{equation}\label{E:tKdV1}
\mathcal{M}u-f(u)+cu+a=0. 
\end{equation}
For a broad range of dispersion symbols and nonlinearities, 
the existence of periodic traveling waves of \eqref{E:KdV6} follows from variational arguments, 
e.g., the mountain pass theorem applied to a suitable functional 
whose critical point satisfies \eqref{E:tKdV1}. 

\

Linearizing \eqref{E:KdV6} about a (nontrivial) periodic traveling wave $u=u(\cdot ;c,a,T)$ 
in the frame of reference moving at the speed $c$, we arrive at that 
\begin{equation}\label{E:linear}
v_t=\partial_x(\mathcal{M}-f'(u)+c)v=:\partial_x\mathcal{L}(u;c,a)v.
\end{equation}
Seeking solutions of the form $v(x,t)=e^{\mu t}v(x)$, moreover, we arrive at the spectral problem 
\begin{equation}\label{E:spec}
\mu v=\partial_x\mathcal{L}(u;c,a)v.
\end{equation}
We say that $u$ is {\em linearly unstable} if the $L^2_{per}([0,T])$-spectrum of $\mathcal{L}(u)$ 
intersects the open, right half plane of $\mathbb{C}$. 

\

We shall derive a criterion governing linear instability of periodic traveling waves of \eqref{E:KdV6}, 
which do not necessarily arise as local constrained minimizers. 
In light of Theorem \ref{T:stability}, a local constrained minimizer for \eqref{E:tKdV1} 
is expected nonlinearly stable under the flow induced by \eqref{E:KdV6} under certain assumptions.  

%%%%%%%%%%%%%%%%%%%%%%%%%%%%%%%%%%%%%%%%%%%%%%%%%
%%%%%%%%%%%%%%%%%%%%%%%%%%%%%%%%%%%%%%%%%%%%%%%%%
\begin{theorem}[Linear instability]\label{T:instability}
Under the assumptions \eqref{A:M} and \eqref{A:f}, 
let $u=u(\cdot\,;c,a,T)$ be a nontrivial, periodic traveling wave of \eqref{E:KdV6} 
for some $c\neq 0$, $a \in\mathbb{R}$ and for some $T>T_0>0$.
Let $\Pi:L^2_{per}([0,T])\to L^2_{per}([0,T])$ denote the orthogonal projection 
onto the subspace of $L^2_{per}([0,T])$ of mean zero functions, defined by 
\[
\Pi u = u - \frac{1}{T}\int_0^Tu(x)~dx.
\]
Assume that $\Pi \mathcal{L}(u;c,a)$ acting on $\Pi L^2_{per}([0,T])$ satisfies that
\begin{equation}\label{E:non-d}
\ker(\Pi \mathcal{L}(u;c,a))={\rm span}\{u_x\}.
\end{equation}
Then \eqref{E:linear} admits a nontrivial solution of the form $e^{\mu t}v(x)$, 
$v \in H^{\alpha}_{per}([0,T])$ and $\mu >0$, if either
\begin{itemize}
\item[(1)] $n_-(\Pi \mathcal{L}(u;c,a))$ is odd and $P_c(u(\cdot;c,a,T))<0$, or 
\item[(2)] $n_-(\Pi \mathcal{L}(u;c,a))$ is even and $P_c(u(\cdot;c,a,T))>0$.
\end{itemize}
\end{theorem}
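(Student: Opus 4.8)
The plan is to study the operator $\partial_x\mathcal{L}(u)$ as a generator and detect a genuine unstable eigenvalue $\mu>0$ by an index/counting argument tied to the signature of $\mathcal{L}$ restricted to the mean-zero subspace. The essential structural observation is that $\partial_x$ maps mean-zero functions onto mean-zero functions and that $J=\partial_x$ is skew-adjoint with kernel spanned by the constants; consequently the natural spectral problem $\mu v=\partial_x\mathcal{L}(u)v$ should be recast on the Hilbert space $L^2_{per,0}([0,T])$ of mean-zero functions, where $\partial_x$ is invertible. On that subspace I would work with the self-adjoint operator $\Pi\mathcal{L}(u)\Pi$, whose nondegeneracy is guaranteed by the hypothesis \eqref{E:non-d}, and whose negative index $n_-$ is finite. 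The strategy is then to reduce the question of existence of a real positive eigenvalue $\mu$ to a finite-dimensional problem and to count solutions by a continuity/parity argument.

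First I would set up the unstable-eigenvalue count following Lin's framework \cite{Lin1}. Writing $A=\Pi\mathcal{L}(u)\Pi$ and $J=\partial_x$ restricted to mean-zero functions, I would introduce the family of self-adjoint operators $A-\mu^2 JA^{-1}J^\ast$ (or the equivalent pencil obtained by squaring the spectral problem), so that real instability corresponds to $\mu^2>0$. The key algebraic identity is that since $J$ is skew-adjoint and invertible on mean-zero functions, the spectral problem $\mu v=JAv$ is equivalent, after setting $w=A v$ or $v=J^{-1}(\text{something})$, to a self-adjoint eigenvalue problem parameterized by $\mu$. I would then track how $n_-(A-\mu^2 B)$, for an appropriate positive operator $B$ built from $J$, changes as $\mu$ runs from $0$ to $+\infty$. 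At $\mu=0$ the count is exactly $n_-(A)=n_-(\Pi\mathcal{L}(u)\Pi)$, corrected by the kernel direction $u_x$; at large $\mu$ the negative index stabilizes. Each time a negative eigenvalue of $A-\mu^2B$ crosses zero as $\mu$ increases, it signals an eigenvalue $\mu$ of the original problem, and the net parity of such crossings is governed by whether $n_-(A)$ is even or odd together with the sign of the quantity that measures the constrained index, namely $P_c$.

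The role of $P_c(u(\cdot;c,a,T))$ enters through the solvability/Vakhitov--Kolokolov type relation: differentiating the profile equation \eqref{E:tKdV1} in $c$ gives $\mathcal{L}(u)u_c=-u$, hence $\langle \mathcal{L}(u)^{-1}\Pi u,\Pi u\rangle$ relates to $\langle u_c,u\rangle=P_c$ up to a mean-zero projection correction. This inner product controls the sign of the reduced quadratic form on the kernel/constraint directions and therefore determines whether, as $\mu\to 0^+$, a negative direction of $A$ is ``used up'' by the constraint or remains available to produce a crossing. Combining the parity of $n_-(\Pi\mathcal{L}(u)\Pi)$ with the sign of $P_c$ yields exactly the two alternatives (1) and (2): in each case the count forces at least one eigenvalue $\mu>0$, giving the growing solution $e^{\mu t}v(x)$. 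The elliptic regularity of the eigenfunction, $v\in H^\alpha_{per}([0,T])$, follows from bootstrapping the eigenvalue equation $\mu v=\partial_x\mathcal{L}(u)v$ using \eqref{A:M} and smoothness of $u$.

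The hard part will be making the index-counting rigorous in the nonlocal, periodic setting: I must justify that $\Pi\mathcal{L}(u)\Pi$ has discrete spectrum bounded below with finite negative index (which follows from \eqref{A:M} giving a coercive principal part $\Lambda^\alpha$ plus a relatively compact perturbation $-f'(u)$), verify that the generalized kernel introduced by the translation mode $u_x$ contributes correctly to the count, and confirm that no spectrum escapes to the imaginary axis or to complex quadruplets in a way that invalidates the real-$\mu$ conclusion. The decisive technical input is Lin's instability index theorem adapted to skew-adjoint $J$ with one-dimensional kernel (the mean), which converts the analytic crossing count into the clean parity criterion stated; verifying its hypotheses—especially that the restriction to mean-zero functions removes the degeneracy of $J$ and that $\Pi\mathcal{L}(u)\Pi$ is nondegenerate by \eqref{E:non-d}—is where the bulk of the careful work lies.
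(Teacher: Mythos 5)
Your structural reduction is sound---any solution of \eqref{E:linear} of the form $e^{\mu t}v$ with $\mu\neq 0$ automatically has mean zero, so the spectral problem lives on $\Pi L^2_{per}([0,T])$, where $J=\partial_x$ has bounded inverse and the relevant self-adjoint operator is $\Pi\mathcal{L}\Pi$---and your remarks on finiteness of the negative index and regularity of $v$ are fine. But the central device of your argument does not exist as described. The problem $\mu v=JAv$ with $J$ skew and $A=\Pi\mathcal{L}\Pi$ indefinite is \emph{not} equivalent to a kernel problem for a self-adjoint pencil: squaring gives $\mu^2 v=JAJA\,v$, a product of the self-adjoint operators $JAJ$ and $A$, neither of which is sign-definite, so no substitution $w=Av$ symmetrizes it; and the pencil you write, $A-\mu^2JA^{-1}J^{\ast}$, is not even defined, since $A$ has kernel ${\rm span}\{u_x\}$ by hypothesis \eqref{E:non-d}. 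What replaces this step in the paper is Lin's genuinely non-self-adjoint family
\[
A^\mu:=c-\frac{c\partial_x}{\mu-c\partial_x}\,(\mathcal{M}-f'(u)),
\]
for which one must prove: a growing mode exists if and only if $\ker A^\mu\neq\{0\}$; the spectrum of $A^\mu$ lies in the right half plane for $\mu$ large; $A^\mu\to\Pi\mathcal{L}\Pi$ strongly as $\mu\to0^+$; and eigenvalues can change half plane only by passing through the origin. The parity argument then runs on the eigenvalue count of this family, not on the negative index of a self-adjoint pencil. None of these ingredients appears in your proposal except as an acknowledged ``hard part.''

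The second, more substantive gap is your identification of the sign-determining scalar as $P_c$ ``up to a mean-zero projection correction.'' That correction is exactly where the periodic problem differs from the solitary-wave one, and it is not harmless. If you carry out the Jordan-chain/index computation you gesture at, you must solve $\Pi\mathcal{L}\Pi\,y=\partial_x^{-1}u_x=\Pi u$ \emph{within} the mean-zero space; since differentiating \eqref{E:tKdV1} gives $\mathcal{L}u_c=-u$ and $\mathcal{L}u_a=-1$, the mean-zero solution (for $M_a\neq0$, modulo $u_x$) is $y=-u_c+(M_c/M_a)u_a$, and the resulting scalar is
\[
\langle \Pi u,\,y\rangle=-P_c+\frac{M_cP_a}{M_a}=-\frac{M_aP_c-M_cP_a}{M_a},
\]
by \eqref{E:McPa}; this is the Jacobian-type quantity appearing in \eqref{E:n-}, and its sign can differ from that of $-P_c$. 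So the route you propose, made rigorous, does not yield the criterion of the theorem as stated. The paper obtains $P_c$ itself from the ``moving kernel'' computation: hypothesis \eqref{E:non-d} guarantees that $A^\mu$ has a unique eigenvalue $e_\mu$ near the origin, depending analytically on small $\mu>0$, and an explicit expansion of the resolvent $(\mu-c\partial_x)^{-1}$ gives $e_\mu/\mu\to0$ and $e_\mu/\mu^2\to-P_c$. With that in hand, alternatives (1) and (2) make the small-$\mu$ left-half-plane count odd while the large-$\mu$ count is zero, forcing a crossing through the origin, i.e., a purely growing mode. Without this computation (or an honest substitute for it), your argument cannot single out $P_c$, and hence does not prove the stated result.
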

%%%%%%%%%%%%%%%%%%%%%%%%%%%%%%%%%%%%%%%%%%%%%%%%%
%%%%%%%%%%%%%%%%%%%%%%%%%%%%%%%%%%%%%%%%%%%%%%%%%

Recall that $n_-(\Pi \mathcal{L}(u;c,a))$ is the number of negative eigenvalues of
$\Pi\mathcal{L}(u;c,a)$ acting on $\Pi L^2_{per}([0,T])$.

\

A complete proof is found in \cite{Lin1}, for instance, albeit in the solitary wave setting;  
see also \cite{ABDF} for a Boussinesq equation. 
The arguments in \cite[Section 4]{Lin1} readily extend to the periodic wave setting. 
Here we merely hit the main points.

Notice that \eqref{E:spec} has a nontrivial solution in $H^\alpha_{per}([0,T])$ for some $\mu>0$, 
namely a purely growing mode, if and only if 
\[
A^\mu :=c-\frac{c\partial_x}{\mu-c\partial_x}(\mathcal{M} -f'(u))
\]
has a nontrivial kernel in $H^\alpha_{per}([0,T])$. Since
\begin{align*}
\frac{c\partial_x}{\mu-c\partial_x}&\to 0\qquad \textrm{as }\mu\to +\infty
\intertext{while}
\frac{c\partial_x}{\mu-c\partial_x}&\to \Pi \qquad \textrm{as }\mu\to 0+
\end{align*}
strongly in $L^2_{per}([0,T])$ (see \cite{Lin1} for the detail),
the spectra of $A^\mu$ lie in the right half plane of $\mathbb{C}$ for $\mu>0$ sufficiently large 
while $A^\mu$ converges to $\Pi\mathcal{L}(u)\Pi$ strongly in $L^2_{per}([0,T])$ as $\mu \to 0+$. 
We then examine eigenvalues of $A^\mu$ near the origin in the left half plane of $\mathbb{C}$  
from those of $\Pi\mathcal{L}(u)$ via the moving kernel method. 
Specifically, \eqref{E:non-d} ensures that for $\mu>0$ sufficiently small 
a unique eigenvalue $e_\mu$ of $A^\mu$ exists in the vicinity of the origin 
that depends upon $\mu$ analytically. A lengthy but explicit calculation moreover reveals that
\[ 
\lim_{\mu \to 0+} \frac{e_\mu}{\mu}=0 \quad\text{and}\quad 
\lim_{\mu \to 0+} \frac{e_\mu}{\mu^2}=-P_c(u(\cdot;c,a,T)).
\]
Theorem~\ref{T:instability} therefore follows 
since if $A^\mu$ admits an odd number of eigenvalues in the left half plane of $\mathbb{C}$, 
signaling that the spectrum of $A^\mu$ crosses the origin at some $\mu>0$, 
then a purely growing mode is found.

\

Concluding the section, we shall contrast Theorem~\ref{T:instability} with Theorem~\ref{T:stability}
as it applies to \eqref{E:KdV} near the solitary wave limit. 
It may not be immediately obvious how they complement each other 
since Theorem~\ref{T:stability} is variational in nature whereas 
Theorem~\ref{T:instability} uses spectral information of the associated linearized operator.

\

Below we relate spectral properties of $\Pi\mathcal{L}(u)$ to those of $\mathcal{L}(u)$.

%%%%%%%%%%%%%%%%%%%%%%%%%%%%%%%%%%%%%%%%%%%%%%%%
%%%%%%%%%%%%%%%%%%%%%%%%%%%%%%%%%%%%%%%%%%%%%%%%
\begin{lemma}[$\Pi\mathcal{L}$ vs. $\mathcal{L}$]\label{L:index}
Let $1/3<\alpha\leq 2$. If $\mathcal{L}(u):=\mathcal{L}(u;c,a)$ is the linearized operator 
associated with \eqref{E:KdV}, which agrees with \eqref{E:L+KdV}, then
\[
n_-(\Pi\mathcal{L}(u))=n_-(\mathcal{L}(u))-
\begin{cases}
1\quad \text{if }M_a\geq 0,\\
0\quad \text{if }M_a<0.\end{cases}
\]
Moreover
\[
\dim(\ker(\Pi\mathcal{L}(u)))=\dim(\ker(\mathcal{L}(u)))+\begin{cases}
1\quad \text{if }M_a= 0,\\
0\quad \text{if }M_a\neq 0.\end{cases}
\]
\end{lemma}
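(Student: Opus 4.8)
The plan is to treat $\Pi\mathcal{L}(u)\Pi$ as the compression of the self-adjoint operator $\mathcal{L}(u)=\delta^2E(u;c,a)=\Lambda^\alpha-2u+c$ to the codimension-one subspace $\Pi L^2_{per}([0,T])=\{\mathbf{1}\}^\perp$ of mean-zero functions, i.e.\ as a single linear constraint against the vector $\chi=\mathbf{1}$, and to run the standard constrained-operator eigenvalue count. First I would record the structural facts that make the count work: since $\Lambda^\alpha$ has compact resolvent on $L^2_{per}([0,T])$ and $-2u+c$ is a bounded multiplication operator ($u\in H^\infty_{per}([0,T])$ by Proposition~\ref{P:existence}), $\mathcal{L}(u)$ is self-adjoint with purely discrete spectrum, bounded below and accumulating only at $+\infty$. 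By Proposition~\ref{P:nondegeneracy}, $\ker\mathcal{L}(u)=\mathrm{span}\{u_x\}$; because $u$ is even, $u_x$ is odd, so $\langle\mathbf{1},u_x\rangle=0$, meaning the constraint vector is orthogonal to the kernel. Finally (L3) of Lemma~\ref{L:L+} gives $\mathbf{1}\in\mathrm{range}(\mathcal{L}(u))$, with $\mathcal{L}(u)u_a=-\mathbf{1}$, so the reduced inverse $\mathcal{L}(u)^{-1}\mathbf{1}=-u_a$ is well defined on $(\ker\mathcal{L}(u))^\perp$.

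Second, I would introduce the scalar constraint function
\[
g(\mu)=\big\langle(\mathcal{L}(u)-\mu)^{-1}\mathbf{1},\,\mathbf{1}\big\rangle,
\]
defined for $\mu$ in the resolvent set. A Lagrange-multiplier computation shows that an eigenvalue $\mu\notin\mathrm{spec}(\mathcal{L}(u))$ of $\Pi\mathcal{L}(u)\Pi$ on $\{\mathbf{1}\}^\perp$ corresponds to a solution of $\mathcal{L}(u)v-\mu v=\sigma\mathbf{1}$ with $v\perp\mathbf{1}$, whence $v=\sigma(\mathcal{L}(u)-\mu)^{-1}\mathbf{1}$ and the constraint $\langle v,\mathbf{1}\rangle=0$ forces $g(\mu)=0$. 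Since $g'(\mu)=\|(\mathcal{L}(u)-\mu)^{-1}\mathbf{1}\|^2>0$, the function $g$ is strictly increasing between consecutive poles, which gives the usual interlacing: exactly one constrained eigenvalue in each spectral gap of $\mathcal{L}(u)$, with one eigenvalue fewer overall. The key point is that, because $\langle\mathbf{1},u_x\rangle=0$, $g$ has \emph{no} pole at $\mu=0$; it is analytic there with
\[
g(0)=\big\langle\mathcal{L}(u)^{-1}\mathbf{1},\,\mathbf{1}\big\rangle=-\langle u_a,\mathbf{1}\rangle=-M_a.
\]
Equivalently, $u_x$ persists as a null vector of $\Pi\mathcal{L}(u)\Pi$ and the $1\times1$ constraint matrix is $D=[\,g(0)\,]=[-M_a]$.

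Third, I would track the single constrained eigenvalue $\mu^\ast$ lying in the gap that straddles the origin. Monotonicity of $g$ together with $g(0)=-M_a$ places $\mu^\ast<0$, $\mu^\ast=0$, or $\mu^\ast>0$ according to $M_a<0$, $M_a=0$, or $M_a>0$. Hence the number of negative eigenvalues of $\Pi\mathcal{L}(u)\Pi$ drops by one exactly when this eigenvalue reaches or passes the origin, i.e.\ when $M_a\ge0$, which is the first asserted identity; and an additional null direction, spanned by the even function $u_a=-\mathcal{L}(u)^{-1}\mathbf{1}$ (independent of the odd $u_x$), is produced precisely in the borderline case $\mu^\ast=0$. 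In inertia language this reads $n_-(\Pi\mathcal{L}(u)\Pi)=n_-(\mathcal{L}(u))-n_-(D)-z(D)$ and $\dim\ker(\Pi\mathcal{L}(u)\Pi)=\dim\ker(\mathcal{L}(u))+z(D)$ with $D=[-M_a]$, from which the stated cases follow.

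The hard part will be justifying the count in the presence of the kernel $\mathrm{span}\{u_x\}$, since the cleanest constrained-operator index theorems are stated for \emph{invertible} $\mathcal{L}$. This is exactly where $\langle\mathbf{1},u_x\rangle=0$ is essential: it removes the pole of $g$ at the origin, decouples the persistent null direction $u_x$ from the constraint, and lets the reduced inverse $\mathcal{L}(u)^{-1}\mathbf{1}$ carry all of the bookkeeping. Making the interlacing and residue argument rigorous for possibly degenerate eigenvalues of $\mathcal{L}(u)$ — correctly tracking multiplicities through the residues of $g$ at the poles adjacent to the origin — is the step demanding the most care.
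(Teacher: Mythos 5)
Your strategy is, in substance, the paper's own proof made explicit rather than a different route. The paper disposes of this lemma in one sentence: it invokes the constrained-operator index theorem of \cite[Theorem~2.1]{KP}, supplying exactly the two inputs you isolate, namely $\langle 1, u_x\rangle =0$ (so the constraint vector is orthogonal to $\ker\mathcal{L}(u)=\mathrm{span}\{u_x\}$, by Proposition~\ref{P:nondegeneracy}) and $\mathcal{L}(u)^{-1}1=-u_a$ (so the $1\times 1$ constraint matrix is $D=[\langle\mathcal{L}(u)^{-1}1,1\rangle]=[-M_a]$). Your constraint function $g(\mu)=\langle(\mathcal{L}(u)-\mu)^{-1}1,1\rangle$, its strict monotonicity, and the interlacing count are precisely the standard proof of that cited theorem, so you are unpacking the black box the paper cites; the structural preliminaries (discrete spectrum, kernel orthogonality, reduced inverse) are all correctly identified, and your argument does establish the first display of the lemma.

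The genuine defect is your closing claim that the \emph{stated} cases follow from the inertia formulas you derived; for the kernel count they do not. Your own analysis (correctly) shows that the extra null direction $u_a=-\mathcal{L}(u)^{-1}1$ appears precisely when $\mu^{\ast}=0$, i.e.\ $z(D)=1$ if and only if $M_a=0$: indeed, if $M_a>0$, then $\mathcal{L}(u)v=\sigma 1$ together with $\langle v,1\rangle=0$ forces $-\sigma M_a=0$, hence $\sigma=0$ and $\ker(\Pi\mathcal{L}(u)\Pi)=\mathrm{span}\{u_x\}$, with no jump in dimension. So the formula $\dim\ker(\Pi\mathcal{L}(u)\Pi)=\dim\ker(\mathcal{L}(u))+z(D)$ yields ``$+1$ if $M_a=0$, $+0$ otherwise,'' which contradicts the lemma's ``$+1$ if $M_a\geq 0$'' in the case $M_a>0$. (Switching conventions so that $\Pi\mathcal{L}(u)\Pi$ acts on all of $L^2_{per}([0,T])$ does not rescue the statement either: constants then contribute a trivial kernel direction in every case.) The discrepancy sits in the printed statement of the lemma rather than in your computation --- the negative-eigenvalue count groups as $M_a\geq 0$ versus $M_a<0$, while the nullity count groups as $M_a=0$ versus $M_a\neq 0$, and the paper's citation-based proof produces exactly the formulas you derived; note the paper only ever applies the lemma when $M_a<0$, where all versions agree. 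But a proof must not paper over this with ``from which the stated cases follow'': you should have flagged that your argument proves the first display as stated and only a corrected version of the second.
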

%%%%%%%%%%%%%%%%%%%%%%%%%%%%%%%%%%%%%%%%%%%%%%%%
%%%%%%%%%%%%%%%%%%%%%%%%%%%%%%%%%%%%%%%%%%%%%%%%

The proof follows from the ``index formula" in \cite[Theorem~2.1]{KP}, for instance. 
We merely note that %Proposition~\ref{P:nondegeneracy} implies that 
$1\in\ker(\mathcal{L}(u))^\perp$ and $(\mathcal{L}(u))^{-1}1 = -u_a$.  
%Various forms of the above ``index formulas" are known in the nonlinear wave community; 
%see \cite{KP,CPV,GSS,BHV} among others. Lemma \ref{L:index} follows 
%upon an application of \cite[Theorem~2.1]{KP}, for instance, 

\

In the case of $1/2<\alpha\leq 2$, we recall from Lemma~\ref{L:s-limit} that 
$M_a(c,a,T)<0$ and $(M_aP_c-M_cP_a)(c,a,T)>0$ 
for $|a|$ sufficiently small and $T>0$ sufficiently large.
Lemma~\ref{L:index}, Proposition~\ref{P:nondegeneracy} and \eqref{E:n-} therefore imply that 
\[
n_-(\Pi \mathcal{L}(u)) = n_-(\mathcal{L}(u)) = 1
\]
near the solitary wave limit.  
%\[\ker(\Pi\mathcal{L}(u)\Pi)=\ker(\mathcal{L}(u))=\text{span}\{u_x\}\]
%near the solitary wave limit. Moreover we recall from Lemma~\ref{L:s-limit} that for $|a|$ sufficiently small and $T>0$ sufficiently large. Lemma~\ref{L:index} therefore implies in view of \eqref{E:n-} that

In the case of $1/2<\alpha\leq 2$, furthermore, 
Lemma \ref{L:s-limit} and \eqref{E:McPa} dictates that $P_c(c,a,T)>0$
%an explicit calculation dictates that $P_c(c,a,T)>0$ 
for $|a|$ sufficiently small and $T>0$ sufficiently large.
Theorem~\ref{T:instability} is therefore inconclusive of local constrained minimizers for \eqref{E:pKdV}, 
in the $L^2$-subcritical case, near the solitary wave limit.
This is consistent with the result in Theorem~\ref{T:stability}. 
Indeed one may appeal to \cite{GSS}, for instance, to argue for that 
local constrained minimizers for \eqref{E:pKdV} with large periods and small $a$'s 
are, in the range $\alpha>1/2$, orbitally stable under the flow induced by \eqref{E:KdV}. 

%%%%%%%%%%%%%%%%%%%%%%%%%%%%%%%%%%%%%%%%%%%%%%%%%
%%%%%%%%%%%%%%%%%%%%%%%%%%%%%%%%%%%%%%%%%%%%%%%%%
\begin{appendix}
%%%%%%%%%%%%%%%%%%%%%%%%%%%%%%%%%%%%%%%%%%%%%%%%%
%%%%%%%%%%%%%%%%%%%%%%%%%%%%%%%%%%%%%%%%%%%%%%%%%

%%%%%%%%%%%%%%%%%%%%%%%%%%%%%%%%%%%%%%%%%%%%%%%%%
%%%%%%%%%%%%%%%%%%%%%%%%%%%%%%%%%%%%%%%%%%%%%%%%%
%%%%%%%%%%%%%%%%%%%%%%%%%%%%%%%%%%%%%%%%%%%%%%%%%
\section{Proof of Lemma~\ref{L:nodal}}\label{S:appendix}
%%%%%%%%%%%%%%%%%%%%%%%%%%%%%%%%%%%%%%%%%%%%%%%%%
%%%%%%%%%%%%%%%%%%%%%%%%%%%%%%%%%%%%%%%%%%%%%%%%%
%%%%%%%%%%%%%%%%%%%%%%%%%%%%%%%%%%%%%%%%%%%%%%%%%

%In this section, we provide a proof of Lemma \ref{L:nodal}. 
%Following the outline in \cite{FL} we begin by noting that 
Note that $\Lambda^\alpha$, $0<\alpha<2$, may be viewed as the Dirichlet-to-Neumann operator 
for a suitable {\em local} problem in the periodic half strip $[0,T] \times [0,\infty)$. 
Specifically (see \cite[Theorem 1.1]{RS}, for instance)
\[
C(\alpha)\Lambda^\alpha u:=\lim_{y\to 0+}y^{1-\alpha}w_y(\cdot, y),
\] 
where $w=\mathcal{E}u$ solves the elliptic, boundary value problem
\[
\Delta w+\frac{1-\alpha}{y}w_y=0\quad \text{in $[0,T]_{per} \times (0,\infty)$}, 
\qquad w=u \quad\text{on $[0,T]_{per} \times \{0\}$}
\]
and $C(\alpha)$ is an explicit constant. Accordingly we may derive a variational characterization of 
eigenvalues and eigenfunctions of \eqref{E:L+KdV} in terms of the Dirichlet type functional 
\[ 
\iint_{[0,T]_{per}\times (0,\infty)} |\nabla w(x,y)|^2y^{1-\alpha}~dxdy
+\int_0^T(-2u(x)+c)|w(x,0)|^2~dx
\]
in a suitable function class. 

Note from Proposition~\ref{P:existence} that an eigenfunction $\phi$ of \eqref{E:L+KdV} 
is in $H^{\alpha/2}_{per}([0,T]) \cap C^0_{per}([0,T])$; see also \cite{FL} in the solitary wave setting. 
Similarly, the extension $\mathcal{E}\phi$ belongs to $C^0([0,T]_{per} \times [0,\infty))$.

%Now, for a given $u\in L^2_{\rm per}([0,T])$ we define $\Gamma(u):[0,T]_{per}\times[0,\infty)\to\mathbb{R}$ as the unique solution of the elliptic boundary value problem \eqref{e:dn} and note, by standard elliptic regularity estimates, that $\Gamma(u)$ is smooth.  

\

Let $N=\{(x,y)\in[0,T]_{per}\times[0,\infty): \mathcal{E}\phi(x,y)=0\}$, 
which is closed in $[0,T]_{per}\times[0,\infty)$. 
We define the \emph{nodal domains} of $\mathcal{E}\phi$ to be the connected components 
of the open set $[0,T]_{per}\times[0,\infty)\setminus N$ in $[0,T]_{per}\times[0,\infty)$.
%Using the above variational characterization of the eigenvalues of \eqref{E:L+KdV} we find the following bound on the number of nodal domains for $\phi$.

%%%%%%%%%%%%%%%%%%%%%%%%%%%%%%%%%%%%%%%%%%%%%%%%%%
%%%%%%%%%%%%%%%%%%%%%%%%%%%%%%%%%%%%%%%%%%%%%%%%%%
\begin{lemma}[Nodal domain bound]\label{L:nodalcount}
Let $0<\alpha<2$. Suppose that \eqref{E:L+KdV} possesses at least $n$ eigenvalues
%assume the eigenvalues $\{\lambda_j\}_{j=1}^\infty$ of the operator $\delta^2E(u)$ are ordered such that
\[
\lambda_1\leq\lambda_2\leq\ldots \leq \lambda_n.
\]
If $\phi_n\in H^{\alpha/2}_{per}([0,T])\cap C^0_{per}([0,T])$ is a (real) eigenfunction 
of \eqref{E:L+KdV} associated with eigenvalue $\lambda_n$
then its extension $\mathcal{E}\phi_n$ has at most $n$ nodal domains in $[0,T]_{per}\times[0,\infty)$.
\end{lemma}
%%%%%%%%%%%%%%%%%%%%%%%%%%%%%%%%%%%%%%%%%%%%%%%%%%
%%%%%%%%%%%%%%%%%%%%%%%%%%%%%%%%%%%%%%%%%%%%%%%%%%

The proof follows from the nodal domain bound \'a la Courant and 
may be found in \cite[Theorem~3.9]{FL}.  
%With this a priori upper bound on the number of nodal domains associated to the extension of a given eigenfunction, the proof of Lemma \ref{L:nodal} follows by simple topological arguments.

\begin{proof}[Proof of Lemma~\ref{L:nodal}]
It follows from the Perron-Frobenius argument that eigenvalue $\lambda_1$ is simple and 
a corresponding eigenfunction may be chosen to be strictly positive (or negative) over $[0,T)$. 
Moreover it follows from the arguments of the proof of \cite[Theorem~3.1]{FL} that
an eigenfunction $\phi_2$ associated with eigenvalue $\lambda_2$ 
changes its sign at most twice over $[0,T)$.
This proves the claim for $j=1,2$.
Incidentally $\phi_2$ changes its sign at least once and 
the extension $\mathcal{E}\phi_2$ has at least two nodal domains in $[0,T)\times [0,\infty)$.

%First, if follows from the Perron-Frobenius argument that eigenvalue $\lambda_1$ is simple and an associated eigenfunction $\psi_1$ may be chosen to have a fixed sign over $[0,T]$. 
%Now, suppose $\psi_2$ is an eigenfunction for $\delta^2E(u)$ associated with the second eigenfunction $\lambda_2$.  It can then be shown by arguments as in \cite{FL} that $\psi_2$ has at most $2$ sign changes over a single period $[0,T]$.  In particular, since the background wave $u$ is even it can be shown that $\psi_2$ is even and vanishes at most once over the half period $[0,T/2)$.  This establishes the claim of Lemma \ref{L:nodal} in the cases $j=1,2$. 

\

Let $\phi_3$ denote an eigenfunction associated with eigenvalue $\lambda_3$. 
Suppose that $\phi_3$ changes its sign at least five times on $[0,T)$. 
We then find six points 
%To verify Lemma \ref{L:nodal} in the case $j=3$, let $\psi_3$ denote an eigenfunction associated with the third eigenvalue $\lambda_3$ of $\delta^2E(u)$, and suppose that $\psi_3$ has at least $5$ sign changes.  Then there exists six points 
\[
0<x_1<\xi_1<x_2<\xi_2<x_3<\xi_3<T
\]
such that, up to multiplication by $-1$, 
\[
\phi_3(x_k)>0 \quad \text{and}\quad \phi_3(\xi_k)<0, \qquad k=1,2,3.
\]
By continuity, moreover,
\[
\mathcal{E}\phi_3(x_k, \varepsilon)>0\quad\text{and}\quad \mathcal{E}\phi_3(\xi_k,\varepsilon)<0,
\qquad k=1,2,3,
\]
and $0\leq\varepsilon\leq\varepsilon_0$ for some $\varepsilon_0$. 
Clearly $\mathcal{E}\phi_3$ has at least two nodal domains in $[0,T)\times [0,\infty)$.
The proof of \cite[Theorem~3.1]{FL}, furthermore, dictates that 
$\mathcal{E}\phi_3$ have at least three nodal domains in $[0,T)\times [0,\infty)$.
For, in the case of exactly two nodal domains, $\phi_3$ cannot change its sign more than twice.
We therefore conclude from Lemma~\ref{L:nodalcount} that 
$\mathcal{E}\phi_3$ has exactly three nodal domains in $[0,T)\times [0,\infty)$.

%such that (up to multiplying by $-1$) $\psi_3(x_j)>0$ and $\psi_3(y_j)<0$ for each $j=1,2,3$.  By continuity, there exists an $\varepsilon_0>0$ sufficiently small such that
%\[\Gamma(\psi_3)(x_j,\varepsilon)>0\quad\textrm{and}\quad\Gamma(\psi_3)(y_j,\varepsilon)<0\]
%for all $0\leq\varepsilon\leq\varepsilon_0$.  In particular, by Lemma \ref{L:nodalcount} the function $\Gamma(\psi_3)$ has at least 2 and at most three nodal domains in $[0,T]\times[0,\infty)$.  Since the number of sign changes of $\psi_3$ in $[0,T]$ is greater than 2, it follows by the $j=2$ case (proved in \cite{FL}) that $\Gamma(\psi_3)$ must have at least 3, and hence exactly 3 nodal domains in $[0,T]\times[0,\infty)$.

Since nodal domains are open and connected, and hence pathwise connected, in $[0,T)\times [0,\infty)$,
we may find a continuous curve $\gamma \in C^0([0,1];[0,T]\times [0,\infty))$ such that 
\[
\gamma(0)=x_k,\quad \gamma(1)=x_\ell, \qquad 1\leq k<\ell\leq 3,
\]
and
\[
\mathcal{E}\phi_3(\gamma(t))>0 \qquad \text{for all}\quad t\in [0,1].
\]
In particular, $\gamma(t)$ belongs to the same nodal domain for all $t\in(0,1)$, denoted $\Omega_1$. 
Indeed, if $(x_k,\varepsilon)$'s, $k=1,2,3$ and $0<\varepsilon<\varepsilon_0$, 
belong to separate nodal domains then $\mathcal{E}\phi_3$ has at least four nodal domains,
since $(\xi_k, \varepsilon)$'s belong to different nodal domains.

%Now, since $\Gamma(\psi_3)$ has exactly 3 nodal domains, and since nodal domains are open and connected, hence pathwise connected, sets in the upper half plane, it follows that there must be two points $x_i<x_j$, $1\leq i<j\leq 3$, and a continuous curve $\gamma_1\in C^0([0,t],[0,T]\times[0,\infty))$ such that $\gamma_1(0)=x_i$, $\gamma_1(1)=x_j$, and
%\[\Gamma(\psi_3)(\gamma(t))>0\quad\textrm{for all }t\in[0,t].\]
%In particular, it follows that the curve $\gamma_1$ belongs to the same nodal domain, call it $\Omega_1\subset(0,T)\times(0,\infty)$, for all $t\in(0,1)$.  We now consider three cases.

\

Suppose $k=1$ and $\ell=2$. The Jordan curve theorem implies that 
there cannot be a continuous curve in $[0,T)\times[0,\infty)$ 
which connects $\xi_1$ to either $\xi_2$ or $\xi_3$. 
Therefore $(\xi_1,\varepsilon)$, $0<\varepsilon<\varepsilon_0$, belongs to a second nodal domain,
denoted $\Omega_2$, which is disjoint from the nodal domains 
containing $(\xi_2,\varepsilon)$ and $(\xi_3,\varepsilon)$. 
Since $\mathcal{E}\phi_3$ has exactly three nodal domains, 
$(x_3,\varepsilon)$, $0<\varepsilon<\varepsilon_0$, must belong to the nodal domain $\Omega_1$. 
However, this implies by the Jordan curve theorem that 
$(\xi_2,\varepsilon)$ and $(\xi_3,\varepsilon)$, $0<\varepsilon<\varepsilon_0$, 
must lie in separate nodal domains.
A contradiction proves that $x_1$ and $x_2$ cannot belong to the boundary of the same nodal domain.

To proceed, suppose $k=1$ and $\ell=3$. The Jordan curve theorem similarly implies that
there cannot be a continuous curve in $[0,T)\times[0,\infty)$
which connects $\xi_3$ to either $\xi_1$ or $\xi_2$. 
Therefore, $(\xi_3,\varepsilon)$, $0<\varepsilon<\varepsilon_0$, belongs to a second nodal domain,
which is disjoint from the nodal domains containing $(\xi_1,\varepsilon)$ and $(\xi_2,\varepsilon)$. 
Since $\mathcal{E}\phi_3$ has exactly three nodal domains, 
$(x_2,\varepsilon)$, $0<\varepsilon<\varepsilon_0$, must belong to the nodal domain $\Omega_1$,
which is impossible by the same line of the argument as above.

Lastly, suppose $k=1$ and $\ell=2$. The Jordan curve theorem implies that
there cannot be a continuous curve in $[0,T)\times[0,\infty)$
which connects $\xi_2$ to either $\xi_1$ or $\xi_3$. 
Therefore, $(\xi_2,\varepsilon)$, $0<\varepsilon<\varepsilon_0$, belongs to a second nodal domain,
which is disjoint from the nodal domains containing $(\xi_1,\varepsilon)$ and $(\xi_3,\varepsilon)$. 
This is impossible by the same line of the argument as above. 

A contradiction therefore completes the proof.
\end{proof}

%%%%%%%%%%%%%%%%%%%%%%%%%%%%%%%%%%%%%%%%%%%%%%%%%
%%%%%%%%%%%%%%%%%%%%%%%%%%%%%%%%%%%%%%%%%%%%%%%%%
%\begin{remark}[General Oscillation Result]\label{R:Oscillation}\rm
%More generally, it is possible to use an inductive argument in conjunction with the above procedure to verify that for each $j=\mathbb{N}$, the eigenfunction $\psi_j$ has at most $2(j-1)$ sign changes on\footnote{In fact, the result holds on the whole line as well.} $[0,T]$.  
%This extension and its applications are will be reported in \cite{HJ}.\end{remark}
%%%%%%%%%%%%%%%%%%%%%%%%%%%%%%%%%%%%%%%%%%%%%%%%%
%%%%%%%%%%%%%%%%%%%%%%%%%%%%%%%%%%%%%%%%%%%%%%%%%
\end{appendix}

%%%%%%%%%%%%%%%%%%%%%%%%%%%%%%%%%%%%%%%%%%%%%%%%%%
%%%%%%%%%%%%%%%%%%%%%%%%%%%%%%%%%%%%%%%%%%%%%%%%%%
%%%%%%%%%%%%%%%%%%%%%%%%%%%%%%%%%%%%%%%%%%%%%%%%%%
\subsection*{Acknowledgements}
VMH is supported by the National Science Foundation 
under grants DMS-1008885 and CAREER DMS-1352597, 
the University of Illinois at Urbana-Champaign under grant RB11162,
an Alfred P. Sloan research fellowship. 
MJ gratefully acknowledges support from the National Science Foundation under grant 
DMS-1211183 and from the University of Kansas General Research Fund under allocation 2302278. 
The authors thank Zhiwu Lin for sharing his report \cite{Lin2},
and the anonymous referees for their careful reading of the manuscript 
and many helpful suggestions and references.
%%%%%%%%%%%%%%%%%%%%%%%%%%%%%%%%%%%%%%%%%%%%%%%%%%
%%%%%%%%%%%%%%%%%%%%%%%%%%%%%%%%%%%%%%%%%%%%%%%%%%
%%%%%%%%%%%%%%%%%%%%%%%%%%%%%%%%%%%%%%%%%%%%%%%%%%

\bibliographystyle{amsalpha}
\bibliography{stabilityBib}

\end{document}